\newtheorem{definition}{Definition}[section]
\newtheorem{theorem}[definition]{Theorem}
\newtheorem{lemma}[definition]{Lemma}
\definecolor{Red}{rgb}{1,0.,0.}
\newcommand{\R}{{\mathbb R}}
\newcommand{\mS}{{\mathsf S}}
\newcommand{\mT}{{\mathsf T}}
\newcommand{\mA}{{\mathsf A}}
\newcommand{\mI}{{\mathsf I}}
\newcommand{\mC}{{\mathsf C}}
\newcommand{\mD}{{\mathsf D}}
\newcommand{\mQ}{{\mathsf Q}}
\newcommand{\mR}{{\mathsf R}}
\newcommand{\mV}{{\mathsf V}}
\newcommand{\mU}{{\mathsf U}}
\newcommand{\mM}{{\mathsf M}}
\newcommand{\mL}{{\mathsf L}}
\newcommand{\mSigma}{{\mathsf \Sigma}}
\newcommand{\mGamma}{{\mathsf \Gamma}}
\title{Distributed Tikhonov regularization for ill-posed inverse problems from a Bayesian perspective}
\author{D Calvetti \and E Somersalo}
\date{Department of Mathematics, Applied Mathematics, and Statistics\\
	Case Western Reserve University \\
	10900 Euclid Avenue \\
	Cleveland, OH 44106}
\begin{document}
\maketitle
\begin{abstract}

In this article, we exploit the similarities between Tikhonov regularization and Bayesian hierarchical models to propose a regularization scheme that acts like a distributed Tikhonov regularization where the amount of regularization varies from component to component, and a computationally efficient numerical scheme that is suitable for large-scale problems. In the standard formulation, Tikhonov regularization compensates for the inherent ill-conditioning of linear inverse problems by augmenting the data fidelity term measuring the mismatch between the data and the model output with a scaled penalty functional. The selection of the scaling of the penalty functional is the core problem in Tikhonov regularization. If an estimate of the amount of noise in the data is available, a popular way is to use the Morozov discrepancy principle, stating that the scaling parameter should be chosen so as to guarantees that the norm of the data fitting error is approximately equal to the norm of the noise in the data. A too small value of the regularization parameter would yield a solution that fits to the noise (too weak regularization) while a too large value would lead to an excessive penalization of the solution (too strong regularization). In many applications, it would be preferable to apply distributed regularization, replacing the regularization scalar by a vector valued parameter, so as to allow different regularization for different components of the unknown, or for groups of them. A distributed Tikhonov-inspired regularization is particularly well suited when the data have significantly different sensitivity to different components, or to promote sparsity of the solution. The numerical scheme that we propose, while exploiting the Bayesian interpretation of the inverse problem and identifying the Tikhonov regularization with the Maximum A Posteriori (MAP) estimation, requires no statistical tools. A clever combination of numerical linear algebra and numerical optimization tools makes the scheme computationally efficient and suitable for problems where the matrix is not explicitly available. 
\end{abstract}

\section{Introduction}

Consider the classical linear inverse problem of estimating an unknown vector $x\in\R^n$ from noisy indirect observations, described by the model
\begin{equation}\label{Ax=b}
 b = \mA  x+ \varepsilon, \quad \mA\in\R^{m\times n},
 \end{equation}
 where typically the matrix $\mA$ is ill-conditioned. In many applications where the data are expensive, difficult to collect, or the channels are limited by the instrumentation, the problem is underdetermined, i.e., $m<n$, and the presence of a non-trivial  null space ${\mathcal N}(\mA)$ exacerbates the ill-posedness of the problem because the solution, if it exists, is non-unique. In the following, when $m<n$, to guarantee the existence of the solution, we assume that ${\rm rank}(\mA) = m$. 
 
A way of addressing the ill-conditioning of the problem is to replace the problem by a nearby well-posed one, a process known as regularization \cite{engl1996regularization}.  One of the most popular approaches is Tikhonov regularization, which replaces (\ref{Ax=b}) with the minimization problem
 \[
  x_\alpha = {\rm argmin}\big\{\|\mA x - b\|^2 + \alpha \|x\|^2\big\},
 \]
 where $\alpha>0$ is the regularization parameter. If the norm of the noise $\varepsilon$ is given, e.g., in the deterministic form $\|\varepsilon\| \approx \delta$, the regularization parameter $\alpha$ can be selected according to the Morozov discrepancy principle stating that the value of $\alpha $ should be such that
 \[
  \|\mA x_\alpha - b\| = \delta.
 \]
 The idea behind Morozov discrepancy principle is that if the norm of the residual is below that of the noise, the solution is likely fitting to the noise. For other selection criteria, e.g., Generalized Cross Validation (GCV), Unbiased Predictive Risk Estimation (UPRE), L-curve method, we refer to \cite{engl1996regularization,hansen1998rank,hansen2010discrete,o1986statistical,kaipio2006statistical} Observe that by construction, $x_\alpha \perp {\mathcal N}(\mA)$, thus the Tikhonov regularized solution solves automatically the problem of non-uniqueness in the underdetermined case. A richer class of solutions can be obtained introducing a more general regularization term  
 \[
  x_{\mL,\alpha} = {\rm argmin}\big\{\|\mA x - b\|^2 + \alpha \|\mL x\|^2\big\},
 \]
 where $\mL\in\R^{k\times n}$ is matrix defining a seminorm. In this case the solution $ x_{\mL,\alpha}$ for any fixed parameter value $\alpha>0$ is unique provided that ${\mathcal N}(\mA)\cap{\mathcal N}(\mL) = \{0\}$.  The selection of the regularization parameter can be done by the same argument as before, however in general the regularized solution is no longer orthogonal to the null space of $\mA$, i.e., the matrix $\mL$ allows us to retrieve pertinent information from the null space not contained in the observation.

In many applications controlling the regularization level by one single regularization parameter is unsatisfactory. In geophysics \cite{li19963,li19983_2} and biomedical applications \cite{lin2006assessing,uutela1999visualization}, it is common to resort to some sort of distributed Tikhonov-type regularization scheme, minimizing an objective function of the form
\begin{equation}\label{lq}
 G(x) = \|\mA x - b\|^2 + \sum_{j=1}^n w_j |x_j|^q,
\end{equation}
where $1\leq q\leq 2$ and $w_j>0$. The non-uniform regularization is crucial because the sensitivity of the data to different components of the unknown may be vastly different, thus the components of less sensitivity need to be penalized less severely to avoid that the algorithm explains the data only in terms of the components with higher sensitivity. A popular rule of thumb is to introduce sensitivity weighting, choosing the weights as
\begin{equation}\label{sens}
 w_j = \alpha \| a^{(j)}\|, \quad a^{(j)} \in\R^m \mbox{ is the $j$th column vector of $\mA$,}
\end{equation}
while $\alpha$ is the Tikhonov regularization parameter. The scalar $q$ (\ref{lq}) is a parameter controlling the sparsity of the solution: Choosing $q=1$, the regularization, originating form geophysical applications \cite{santosa1986linear}, corresponds to the basis pursuit scheme \cite{chen1994basis,foucart2013invitation} in compressive sensing, or LASSO in the statistical literature \cite{tibshirani1996regression}.

In this work, we consider a regularization scheme in which each component of the vector $z = \mL x\in\R^k$ may contribute a different amount  to the solution. More precisely, the regularized solution is of the form 
\begin{equation}\label{general tikh}
 x_\theta = {\rm argmin}\left\{\|\mA x- b\|^2 + \sum_{\ell=1}^L \frac{\|\mL_\ell x\|^2}{\theta_\ell}\right\},
\end{equation}
with some matrices $\mL_\ell \in\R^{k_\ell\times n}$ and the reciprocals of the entries of the vector $\theta_\ell$ playing the role of distributed regularization functionals and parameters. There are several reasons to consider such generalizations of the Tikhonov regularization scheme: 
In addition to the applications with uneven sensitivities discussed above,
another important class of applications, elaborated further below, where distributed regularization is important is when the unknown to be estimated is sparse. Given a matrix $\mL\in\R^{k\times n}$,  we seek a solution $x$ such that
 \[
  z = \mL x, \quad \|z\|_0  = \#\{j \mid z_j \neq 0\}  \ll k,
 \]
 is {\em sparse} or more generally, for a given threshold $\delta>0$, $z$ is {\em compressible},
 \[
   \|z\|_{0,\delta}  = \#\{j \mid |z_j| > \delta\}  \ll k.
 \]  
 In the latter example, we call $\mL$ a  sparsifying operator.  Typically, we have $m<n<k$.  As will be pointed out in the discussion below, the sparsity property corresponds to a judicious choice of the prior assumptions about the parameter vector $\theta$.  The general form (\ref{general tikh}) allows also to consider group sparsity constraints that are important, e.g., in dictionary learning applications \cite{bocchinfuso2023bayesian}.

This article is a synthesis of a number of ideas put here in a common context in a unified manner, streamlining some computational ideas published earlier. Some of the computational ideas proposed in this article, while not new, seem not to have found their way in applications in which they may make a significant difference in speeding up the computations. Others, such as the analysis of the Tikhonov regularization parameter on the noise level when estimated by Bayesian methods, can prove to be important even for theoretical considerations of posterior consistency. 

\section{Hierarchical conditionally Gaussian prior model}

In this section, we introduce Bayesian hierarchical prior models and derive the associated sparsity-promoting optimization problem.  To set the notation, we denote by $x\in\R^n$ the variable of primary interest: adhering the practice in statistics of denoting random variables by capital letters and their realizations by lower case letters, let $X$ be the corresponding $n$-variate random variable. For a general reference of the background and methodology, see \cite{calvetti2023bayesian}. 

We start by introducing a general hierarchical model that covers several special models of interest.
 Let  $\mL\in\R^{k\times n}$, $k\geq n$, be a matrix of full rank, and assume that a partition $\Pi = \{I_\ell\}_{\ell=1}^k$ iof the index set $I=\{1,2,\ldots,n\}$ is given, that is,
 \[
  \{1,2,\ldots,n\} = \bigcup_{\ell=1}^L I_\ell, \quad I_j\cap I_\ell = \emptyset\mbox{ for $j\neq \ell$.}
 \] 
Define the corresponding row-partitioning of the matrix $\mL$ as
 \[
  \mL_\ell = \mL(I_\ell,\,:\,)\in\R^{k_\ell \times n}, \quad k_\ell = \# I_\ell,
 \] 
 with $k_1+\ldots + k_L = k$, and introduce auxiliary random variables $Z_\ell$ through
\[
 Z_\ell = \mL_\ell X \in\R^{m_\ell}, \quad 1\leq \ell\leq L.
\] 
We define a hierarchical prior model by postulating that the variables $Z_\ell$ are mutually independent zero-mean normally distributed random variables and that their covariance matrices are scaled identities. Further, we assume that the unknown scaling factors are mutually independent random variables following generalized gamma distributions, i.e.,
\begin{eqnarray*}
 Z_\ell  &\sim& {\mathcal N}(0,\theta_\ell\,\mI_{k_\ell}) \quad\mbox{mutually independent,} \\
 \Theta_\ell &\sim& {\rm Gen Gamma}(\beta_\ell,\vartheta_\ell, r)  \quad\mbox{mutually independent.}
\end{eqnarray*} 
Here  $\mI_{k_\ell}$ is the $k_\ell\times k_\ell$ identity matrix,  $\beta_\ell>0$  is the shape parameter, $ \vartheta_\ell>0$ is the scale parameter, and $r\neq 0$ is a parameter selecting the distribution type from the generalized gamma distribution family, determining the asymptotic behavior  as $\theta_\ell\to 0+$ and $\theta_\ell\to \infty$. We recall that the expression for the generalized gamma density function is 
 \[
  \pi_{\Theta_\ell}(\theta_\ell) = \frac{|r|}{\Gamma(\beta_\ell)\vartheta_\ell}\left(\frac{\theta_\ell}{\vartheta_\ell}\right)^{r\beta_\ell -1}
 {\rm exp}\left( - \left(\frac{\theta_\ell}{\vartheta_\ell}\right)^r\right).
\] 
The interpretation of the various hyperparameters will be discussed later. The heuristic justification for choosing a generalized gamma prior for the variances $\theta_\ell$ to promote sparsity is that they are positive fat-tailed distributions favoring relatively small values but allowing occasional large outliers. For a graphical rendition of the argument, see Figure~\ref{fig:hierarchical}.
\begin{figure}[ht!]
 \centerline{\includegraphics[width=9cm]{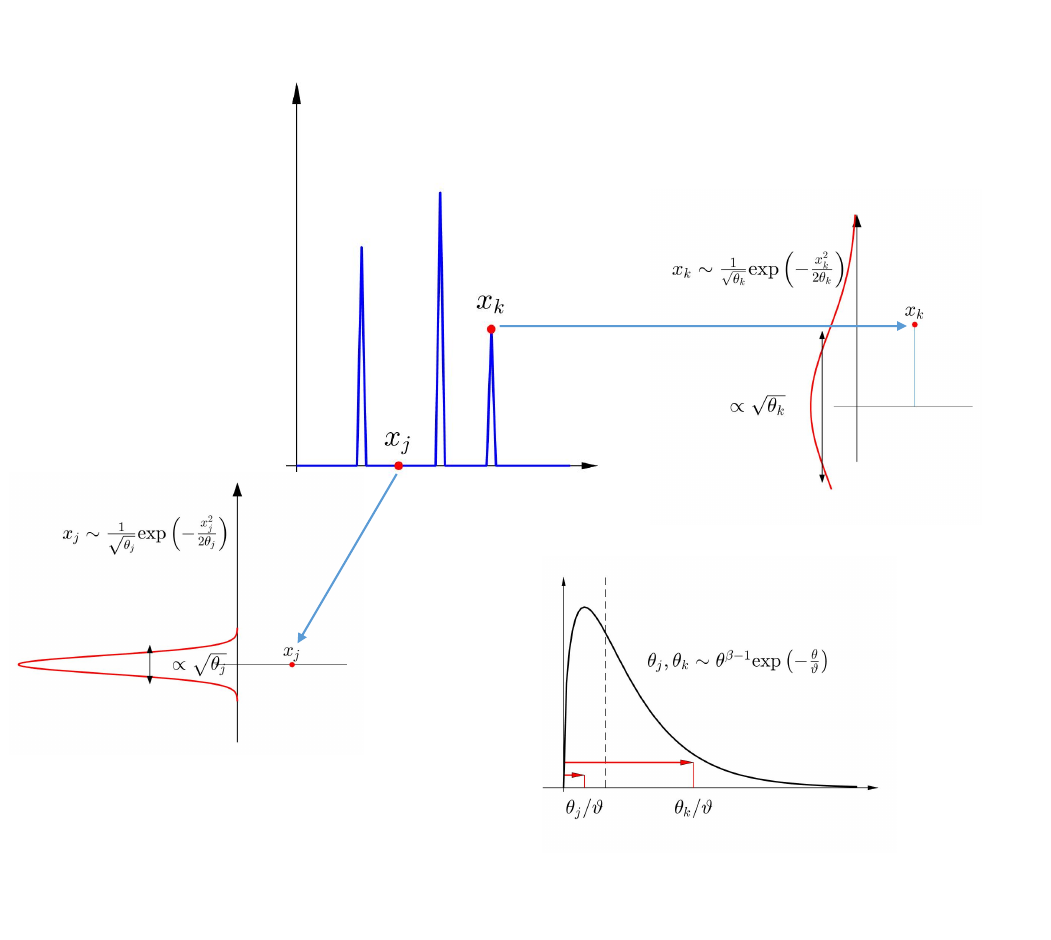}}
 \caption{\label{fig:hierarchical} A schematic picture justifying the selection of hyperpriors from the generalized gamma family. For simplicity, assume that $\mL=\mI$, and we require that $x$ is component-wise sparse. A component $x_j$ that is assumed to vanish or to be small is  a priori thought of being drawn from a Gaussian with small variance $\theta_j$, while a large component $x_k$ is drawn from a Gaussian with large $\theta_k$. A priori, we do not know where the large values are, but for sparsity, they should be rare, which is in line with the assumption that $\Theta$ has independent components with a heavy tailed distribution. }
 \end{figure}
 
 Introduce the vector
 \[
  Z = \left[\begin{array}{c} Z_1 \\ \vdots \\ Z_L\end{array}\right] \in \R^k,
 \] 
 and write the joint probability density of the pair $(Z,\Theta)\in\R^k\times \R^L$ as
\begin{eqnarray*}
&& \pi_{Z,\Theta}(x,\theta) = \pi_{Z\mid\Theta}(z\mid \theta)\pi_\Theta(\theta) \\
 && \phantom{X}\propto 
  {\rm exp}\left( - \frac 12 \sum_{\ell=1}^L \frac{\|z_\ell\|^2}{\theta_\ell} - \sum_{\ell=1}^L\left(\frac{\theta_\ell}{\vartheta_\ell}\right)^r +\sum_{\ell=1}^L\left(r\beta_\ell - \frac {k_\ell+2}2\right) \log\frac{\theta_\ell}{\vartheta_\ell}\right), \quad  z \in {\mathcal R}(\mL),
 \end{eqnarray*}
where ${\mathcal R}(\mL)\subset \R^k$ is the range of the matrix $\mL$. We remark that the density is concentrated on the subspace $ {\mathcal R}(\mL)\times\R^L$ because we assume that the variable $Z$ is related to a hidden variable $X\in\R^n$ through the relation $Z = \mL X$.
 
 Next, we introduce the likelihood model based on the forward model
 \[
  B = \mA X + E, \quad  E\sim{\mathcal N}(0,\mSigma),
 \]
where the noise covariance matrix $\mSigma\in\R^{m\times m}$ is symmetric positive definite, and let $\mSigma^{-1} = \mS^\mT\mS$ be a symmetric factorization for the precision matrix.
 Since a priori  $Z \in {\mathcal R}(\mL)$ and $\mL$ is assumed to have full rank, 
 \[
  Z = \mL X, \mbox{ hence $X = \mL^\dagger Z$,}
 \] 
with $\mL^\dagger$ denoting the pseudoinverse of $\mL$. Therefore the likelihood can be written in terms of the variable $Z$ as
 \[
  \pi_{B\mid Z} \propto {\rm exp}\left( -\frac 1{2}\big\|\mS\big( b - \mA \mL^\dagger z\big)\big\|^2\right).
 \] 
In the following, without loss of generality we assume that $\mS = \mI_m$, which is equivalent to replacing $b$ and $\mA$ by their whitened versions, $\mS b$ and $\mS\mA$, respectively.

It follows from Bayes' formula that the posterior density of $z$ is 
\begin{eqnarray*}
&&\pi_{Z,\Theta\mid B}(z,\theta\mid b) \propto \pi_{Z,\Theta}(z,\theta) \pi_{B\mid Z}(b\mid z) \\
&&\phantom{XX} \propto {\rm exp}\left( -\frac 1{2}\big\| b - \mA \mL^\dagger z\big\|^2  -  \frac 12 \sum_{\ell=1}^L \frac{\|z_\ell\|^2}{\theta_\ell}  -\Phi(\theta)\right), \quad z\in{\mathcal R}(\mL),
\end{eqnarray*}
where
\[
 \Phi(\theta) =\Phi_r(\theta,\beta,\vartheta) = \sum_{\ell=1}^L\left(\frac{\theta_\ell}{\vartheta_\ell}\right)^r - \sum_{\ell=1}^L\left(r\beta_\ell - \frac{k_\ell + 2}2\right) \log\frac{\theta_\ell}{\vartheta_\ell}.
\] 
In the following we summarize the posterior in terms of the Maximum a Posteriori (MAP) estimate, defined as
\begin{equation}\label{MAP}
 (z,\theta)_{\rm MAP} = \underset{(z,\theta)\in{\mathcal R}(\mL)\times \R^L}{\rm argmin}\left\{\frac 1{2}\big\| b - \mA \mL^\dagger z\big\|^2  + \frac 12 \sum_{\ell=1}^k \frac{\|z_j\|^2}{\theta_j}  + \Phi(\theta)\right\}, 
 \end{equation}
Next we establish the connection between the Bayesian hierarchical model and Tikhonov regularization in some special cases. 

Consider first a partition of the index set in singletons, $I_\ell = \{\ell\}$, $1\leq \ell \leq L = k$.This is tantamount to promoting component-wise sparsity, and the objective function to be minimized can be expressed in terms of $x = \mL^\dagger z$ as
  \begin{equation}\label{componentwise}
 G(x,\theta) = \frac 1{2}\big\| b - \mA x \big\|^2  + \frac 12 \sum_{\ell=1}^k \frac{\big[(\mL x)_\ell\big]^2}{\theta_\ell}  + \Phi(\theta).
\end{equation}
If $\mL$ is a finite difference matrix, promoting the sparsity of $\mL x$ is akin to total variation penalty, while letting $\mL = \mI_n$ and $k=n$ promotes a sparse solution of the linear inverse problem.

At the other extreme, we consider a trivial partition $\Pi = \{ I\}$, in which case the objective function reduces to
 \begin{equation}\label{Tikhonov}
 G(x,\theta) = \frac 1{2}\big\| b - \mA x \big\|^2  + \frac 12 \frac{\|\mL x\|^2}{\theta}  + \Phi(\theta),
\end{equation}
from which we deduce that this is the Bayesian equivalent of the whitened standard Tikhonov regularization with regularization parameter $\alpha = 1/\theta$. The optimization algorithm for the MAP computation is the Bayesian way of determining the value of the regularization parameter, similarly to using Morozov's discrepancy principle or any other alternative.

Finally, consider an example in which the matrix $\mL$ is block diagonal,
\[
 \mL = \left[\begin{array}{cccc} \mGamma_1 &  &  &  \\  & \mGamma_2 & & \\
  &  & \ddots & \\ & & & \mGamma_k\end{array}\right] = \left[\begin{array}{c} \mL_1 \\ \mL_2 \\ \vdots \\ \mL_k\end{array}\right], \quad \mGamma_\ell \in \R^{k_\ell\times k_\ell},
\] 
and the matrices $\Gamma_\ell$ are symmetric positive definite.
In this case, the objective function becomes
  \begin{equation}\label{structural}
 G(x,\theta) = \frac 1{2}\big\| b - \mA x \big\|^2  + \frac 12 \sum_{\ell=1}^k \frac{\|\mGamma_\ell^{1/2}  x^{(\ell)}\|^2}{\theta_\ell}  + \Phi(\theta),
\end{equation}
where $x^{(\ell)} = x(I_\ell)$, that is, the unknown is partitioned into $k$ blocks. In this case the sparsity is promoted on the groups of components, and the matrices $\mGamma_\ell^{-1}$ account for the intrinsic covariance of the $\ell$th block of components. Example of prior assumptions of this kind can be found in \cite{calvetti2015hierarchical,bocchinfuso2023bayesian}.

\section{Computing the MAP estimate}

In this section we review an algorithm, known as the Iterative Alternating Sequential (IAS) algorithm, for computing the MAP estimate for the hierarchical model introduced earlier, which can be regarded as the Bayesian generalization of the Tikhonov regularized solution. The computation of the minimizer of the objective function is achieved by a block-descent scheme which updates sequentially the unknown of primary interest by solving a Tikhonov regularization problem in standard form and the parameters of the prior covariance. We start by reviewing the optimization scheme for solving the minimization problem (\ref{MAP}). 

\bigskip
\hrule
\medskip

{\bf Iterative Alternating Sequential (IAS)} algorithm
\medskip
\hrule
\medskip

\begin{enumerate}
\item {\bf Given} the forward map $\mA\in\R^{m\times n}$, input data $b\in\R^m$,  full rank matrix $\mL\in\R^{k\times n}$ with a partitioning $\Pi$, parameters $(r,\beta,\vartheta)$,  tolerance $0<\delta<1$.
\item {\bf Initialize} $\theta^0 = \vartheta$, and set $t=0$.
\item {\bf Iterate}: 
\begin{enumerate}
\item {\bf Phase I:} Update $z\in\R^k$ by minimizing 
\[
G_{\theta^t}(z) = \frac 1{2}\big\| b - \mA \mL^\dagger z\big\|^2  + \frac 12 \sum_{\ell=1}^L \frac{\|z_\ell\|^2}{\theta^t_\ell} , \quad z \in {\mathcal R}(\mL),
\]
\item {\bf Phase II:} Update $\theta\in\R^L$ by minimizing
\[
 G_{z^{t+1}}(\theta) =  \frac 12 \sum_{\ell=1}^L \frac{\|z^{t+1}_\ell\|^2}{\theta_\ell}  + \Phi(\theta) ;% = \sum_{\ell=1}^k g_{z_j}(\theta_j).
\] 
\end{enumerate}
\item If $\|\theta^{t+1} - \theta^t\|/\|\theta^t\| < \delta$, stop, else advance the counter $t\rightarrow t+1$ and continue from 3.
\end{enumerate}

\medskip
\hrule
\bigskip

Below, after presenting the details of  the two phases of the algorithm, we discuss questions of its convergence and present some interpretations in the light of the similarities with Tikhonov regularization. 

\subsection{Phase I: Updating z}\label{sec:update z}

The first phase in each IAS iteration is the updating of $z$. Defining the $k\times k$ diagonal matrix,
\[
 \mD_\theta = \left[\begin{array}{cccc} \theta_1\,\mI_{k_1} &&& \\ &\theta_2 \, \mI_{k_2} && \\ &&\ddots & \\ &&& \theta_L\,\mI_{k_\ell}\end{array}\right],
\]
and recalling that $z = \mL x \in{\mathcal R}(\mL)$,  we can write the second term of $G_{\theta^t}(z)$ as
\[
 \sum_{\ell=1}^L \frac{\|z_\ell\|^2}{\theta_\ell} = \| \mD_\theta^{-1/2} \mL x\||^2.
\]
After performing the change of variables
\[
 \xi =  \mL_\theta x, \quad \mL_\theta = \mD_\theta^{-1/2} \mL,  \quad x = \mL_\theta^\dagger \xi, \quad \theta = \theta^{t+1},
\]
the objective function can be written as
\[
 \widetilde G_\theta(\xi) = \frac 12 \|b - \mA \mL_\theta^\dagger \xi\|^2 + \frac 12 \|\xi\|^2, \quad \xi\in {\mathcal R}(\mL_\theta),
\]  
and since the first term does not depend on the orthogonal projection of $\xi$ onto the null space of $\mL_\theta^\mT$, the minimizer of $\widetilde{G}_\theta$ is automatically in the subspace ${\mathcal R}(\mL_\theta)$, hence there is no need to impose the constraint. Therefore the updating of $z$ essentially amounts to solving a Tikhonov-type least squares problem of standard form,
\begin{equation}\label{get xi}
 \|b - \mA\mL_\theta^\dagger \xi\|^2 + \|\xi\|^2  =  \left\|\left[\begin{array}{c} \mA\mL_\theta^\dagger \\ \mI\end{array}\right] \xi - \left[\begin{array}{c} b \\ 0\end{array}\right]\right\|^2.
\end{equation}
While in principle this is a very straightforward problem, it may become a challenge when the problem is high dimensional or the matrix $\mA$ is not available in explicit form. We will discuss later how these issues can be addressed.

\subsection{Phase II: Updating the variances}

By construction, the components $\theta_\ell$ are mutually independent, therefore the objective function to be minimized when updating $\theta$ is of the form
\[
 G_z(\theta) = \sum_{\ell=1}^L g_\ell(\theta_\ell), \quad g_\ell(\theta_\ell) = \frac 12 \frac{\|z_\ell\|^2}{\theta_\ell}  + \left(\frac{\theta_\ell}{\vartheta_\ell}\right)^r - \left(r\beta_\ell - \frac{k_\ell + 2}2\right)\log\frac{\theta_\ell}{\vartheta_\ell}
\] 
After the non-dimensionalization step
\[
 t_\ell  = \frac{\|z_\ell\|}{\sqrt{\vartheta_\ell}}, \quad \lambda_\ell = \frac{\theta_\ell}{\vartheta_\ell},\quad \eta_\ell = r\beta_\ell - \frac{k_\ell + 2}2,
\] 
the component-wise objective function assumes the form
\[
\widetilde g_\ell (\lambda_\ell) =  \frac 12\frac{t_\ell^2}{\lambda_\ell} + \lambda_\ell^r - \eta_\ell \log\lambda_\ell,
\] 
and its minimum is attained at a critical point satisfying the first order optimality condition
\begin{equation}\label{gprime}
 \widetilde g'_\ell(\lambda_\ell) = -\frac{t_\ell^2}{2\lambda_\ell^2} + r\frac{\lambda_\ell^r}{\lambda_\ell} -\frac{\eta_\ell}{\lambda_\ell} = 0.
\end{equation}
For some values of $r$, this equation has explicit closed form solution, for example when $r=1$, assuming that $\eta_\ell>0$, corresponding to the gamma distribution for the variances, the non-negative solution is
\[
 \lambda _\ell = \frac{1}{2}\left(\eta_\ell + \sqrt{\eta_\ell^2 + 2 t_\ell^2}\right).
\] 
When $r=-1$, corresponding to inverse gamma distribution, we have $\eta_\ell<0$ and obtain
\[
 \lambda_\ell = \frac{1}{|\eta_\ell|}\left( \frac{t_\ell^2}{2}+1 \right).
\] 
For other values of $r$, the numerical solution can be found by denoting by $\varphi_\ell$ the functional dependency of the solution $\lambda_\ell$ on $t_\ell$. After substituting the expression $\lambda_\ell = \varphi_\ell(t_\ell)$ in (\ref{gprime}), differentiating implicitly with respect to $t_\ell$, and solving for the derivative of $\varphi_\ell$, we find that the function $\varphi_\ell$ satisfies the initial value problem
\[
  \varphi_\ell'(t) = \frac{2 t\varphi_\ell(t)}{2r^2\varphi_\ell(t)^{r+1} + t^2},\quad \varphi_\ell(0) = \left(\beta_\ell - \frac{k_\ell + 2}{2r}\right)^{1/r},
\] 
where we need to set the parameters so that $\beta_\ell - (k_\ell+2)/(2r)>0$. Therefore we can find $\lambda_\ell$ by solving the initial value problem by a standard Runge-Kutta scheme.

\section{Convergence and parameter selection in IAS Algorithm}

The convergence properties of the IAS algorithm have been investigated in the literature. In \cite{calvetti2019hierachical}, the following theorem was proved for the case when $\mL = \mI_n$ and the partitioning of the matrix is $\Pi = \big\{\{\ell\}\big\}_{\ell=1}^n$, i.e., component-wise sparsity is promoted. In the theorem, we assume that the hyperparameters $\beta_\ell = \beta$ are equal for all $\ell$, $\leq \ell\leq L=n$ and $\eta = \beta - 3/2 >0$.

 \begin{theorem}
The IAS algorithm with gamma hyperprior ($r=1$) converges to the unique minimizer $(\widehat{z},\widehat{\theta})$ of the Gibbs energy functional. Moreover, the minimizer $(\widehat{z},\widehat{\theta})$ satisfies the fixed point condition
\[ \widehat{z} = \underset{z}{{\rm argmin}} \left\{ {\mathscr E}\big( z\mid f(z)\big) \right\}, \quad \widehat{\theta}= f(\widehat{z}),
\]
where $f$ is the map with $j$th component 
\[
 f_j(z_j) = \vartheta_j\left(\frac\eta 2 + \sqrt{\frac{\eta^2}{4} + \frac{2 z_j^2}{\vartheta_j}}\right). 
\]  
\end{theorem}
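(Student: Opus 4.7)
The plan breaks into three steps: establish that the Gibbs energy functional has a unique global minimizer, show that IAS is exact block coordinate descent on it and therefore converges to that minimizer, and then identify the minimizer with the stated fixed-point characterization. Specializing (\ref{componentwise}) to $\mL=\mI_n$, singleton partitioning, $r=1$, and common $\beta_\ell=\beta$ gives the Gibbs energy
\[
\mathscr{E}(z,\theta) = \tfrac{1}{2}\|b-\mA z\|^2 + \tfrac{1}{2}\sum_{\ell=1}^n\frac{z_\ell^2}{\theta_\ell} + \sum_{\ell=1}^n\frac{\theta_\ell}{\vartheta_\ell} - \eta\sum_{\ell=1}^n\log\frac{\theta_\ell}{\vartheta_\ell}
\]
on $\R^n\times\R^n_+$, where $\eta=\beta-3/2>0$ by assumption.

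For existence and uniqueness, I would first verify joint convexity: each summand $z_\ell^2/\theta_\ell$ is the perspective of a convex function and hence jointly convex on $\R\times\R_+$; the data fit is convex in $z$; the linear term $\theta_\ell/\vartheta_\ell$ is convex; and $-\eta\log\theta_\ell$ is strictly convex in $\theta_\ell$ because $\eta>0$. Coercivity in $\theta$ is provided by the linear barrier at infinity and the logarithmic barrier near zero. For coercivity in $z$, I would profile out $\theta$: the inner minimum $\theta_\ell=f_\ell(z_\ell)$ exists and is unique from the Phase II computation, and substituting yields a profile $F(z)=\mathscr{E}(z,f(z))$ whose non-quadratic part grows like $|z_\ell|$ as $|z_\ell|\to\infty$ because $f_\ell$ grows linearly. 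Combined with strict convexity of $F$, this produces a unique global minimizer $(\widehat z,\widehat\theta)$.

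Second, the IAS iteration is exact two-block coordinate descent on $\mathscr{E}$: Phase I is a strictly convex quadratic in $z$ for any fixed $\theta^t>0$, uniquely solvable through (\ref{get xi}), and Phase II decouples across $\ell$ with the closed-form minimizer derived in the paper. Since $\mathscr{E}$ is continuous, jointly convex, coercive, and strictly convex in each block, classical convergence results for exact alternating minimization apply: the iterates stay in a compact sublevel set, the function values decrease monotonically, and every limit point is a stationary point, hence the unique global minimizer; the full sequence therefore converges to $(\widehat z,\widehat\theta)$.

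Finally, the fixed-point identities follow from the structure at the minimum. Stationarity in $\theta$ is exactly the Phase II equation, whose unique positive root is $\widehat\theta_\ell=f_\ell(\widehat z_\ell)$, giving the second relation. The profile identity $\min_\theta\mathscr{E}(z,\theta)=\mathscr{E}(z,f(z))$ then shows that $\widehat z$ minimizes $z\mapsto\mathscr{E}(z\mid f(z))$, giving the first. The main obstacle I anticipate is ensuring strict convexity of the profile $F$ in the rank-deficient case $m<n$ emphasized in the introduction: the data-fit term alone is not strictly convex in $z$, so one must exploit the explicit $r=1$ form of $f$ and the hypothesis $\eta>0$ to extract strict convexity from the combined variance-plus-log-barrier contribution, ideally via a direct Hessian/Schur-complement calculation for $F$.
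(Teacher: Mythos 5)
The paper itself does not prove this theorem; it quotes it from the cited reference, so I am comparing your plan against the standard argument used there. Your three-step strategy --- unique minimizer via joint convexity and coercivity, IAS as exact two-block coordinate descent, and the fixed-point identities from the first-order conditions --- is exactly that argument, and every step goes through. The one obstacle you flag, strict convexity of the profile when $m<n$, is in fact not an obstacle, and you do not need the data-fit term for it at all: for each $\ell$ the Hessian of
\[
(z_\ell,\theta_\ell)\;\longmapsto\; \frac{z_\ell^2}{2\theta_\ell} + \frac{\theta_\ell}{\vartheta_\ell} - \eta\log\frac{\theta_\ell}{\vartheta_\ell}
\]
equals
\[
\frac{1}{\theta_\ell^3}\left[\begin{array}{cc}\theta_\ell^2 & -z_\ell\theta_\ell\\ -z_\ell\theta_\ell & z_\ell^2\end{array}\right] + \left[\begin{array}{cc}0&0\\0&\eta/\theta_\ell^2\end{array}\right],
\]
which has $(1,1)$ entry $1/\theta_\ell>0$ and determinant $\eta/\theta_\ell^3>0$, hence is positive definite for every $\theta_\ell>0$ as soon as $\eta>0$. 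Thus $\mathscr{E}$ is strictly jointly convex on $\R^n\times\R_+^n$ as the sum of this strictly convex separable part and the convex quadratic data-fit term, independently of ${\rm rank}(\mA)$, and strict convexity of the profile $F(z)=\min_\theta\mathscr{E}(z,\theta)$ follows without any Schur-complement gymnastics. Two minor points remain to tidy up. First, the domain in $\theta$ is open, so for the compactness step in the alternating-minimization argument you should observe that the Phase II update obeys $\theta_\ell^{t+1}=f_\ell(z_\ell^{t+1})\ge\eta\vartheta_\ell$, which confines the iterates to a closed set bounded away from $\theta_\ell=0$ on which the sublevel sets of $\mathscr{E}$ are compact. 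Second, reconcile the constant under the square root in $f_j$ with the Phase II minimizer derived in the paper, which reads $\vartheta_j\bigl(\eta/2+\sqrt{\eta^2/4+z_j^2/(2\vartheta_j)}\bigr)$; this differs from the displayed $f_j$ by a factor of $4$ in the last term, a normalization mismatch you should resolve before using $f$ in the coercivity and fixed-point steps.
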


The theorem also holds for other regularization operators $\mL$ and its partitionings. One such generalization  was given in \cite{calvetti2015hierarchical} where the inverse problem of magnetoencephalography (MEG) with an anatomical group sparsity prior of the type (\ref{structural}) was considered.

The following theorem from \cite{calvetti2019hierachical} elucidates  the role of different prior parameters. As in the previous theorem, we assume a component-wise partitioning and set the shape parameter $\eta = \beta - 3/2>0$ for all components.

\begin{theorem}
When $\eta\to 0+$, the MAP estimate $\widehat z = \widehat z_\eta$ converges to the minimizer of the functional
\begin{equation}\label{ell1}
  G_0( z)
=  \frac 12 \|b -\mA z\|^2 + 
  {\sqrt{2}} \sum_{j=1}^n \frac{|z_j|}{\sqrt{\vartheta_j}}.
 \end{equation}
\end{theorem}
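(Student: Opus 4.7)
The plan is to eliminate $\theta$ via the fixed-point identity $\widehat\theta_\eta=f(\widehat z_\eta)$ from the previous theorem and then identify $G_0$ as the pointwise limit of the resulting reduced energy; a standard convex perturbation / $\Gamma$-convergence argument will then upgrade this to convergence of minimizers.

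First, since the minimization over $\theta$ in (\ref{MAP}) decouples across components, $\widehat z_\eta$ minimizes the reduced functional
\[
E_\eta(z) \;=\; \min_{\theta\in\R^n_{>0}}\mathscr E(z,\theta)\;=\;\tfrac12\|b-\mA z\|^2+\sum_{j=1}^n\psi_\eta(z_j),
\]
where $\psi_\eta(z_j)=\tfrac12 z_j^2/f_j(z_j)+f_j(z_j)/\vartheta_j-\eta\log(f_j(z_j)/\vartheta_j)$. The integrand $\mathscr E$ is jointly convex in $(z,\theta)$ on $\R^n\times\R^n_{>0}$ (the perspective $z_j^2/(2\theta_j)$ is jointly convex, $\theta_j/\vartheta_j$ is linear, and $-\eta\log\theta_j$ is convex), so $E_\eta$ is itself convex in $z$.

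Second, I identify the $\eta\to 0$ limit. By the arithmetic--geometric mean inequality,
\[
\min_{\theta_j>0}\Bigl\{\tfrac{z_j^2}{2\theta_j}+\tfrac{\theta_j}{\vartheta_j}\Bigr\}\;=\;\sqrt{2}\,\frac{|z_j|}{\sqrt{\vartheta_j}},\qquad\text{attained at } \theta_j^\ast=|z_j|\sqrt{\vartheta_j/2},
\]
so the $\eta=0$ reduction of $E_\eta$ is exactly $G_0$. The only residual to control is $-\eta\log(f_j(z_j)/\vartheta_j)$: for $z_j$ in a compact set, $f_j(z_j)\in[\vartheta_j\eta/2,\,C(1+|z_j|)]$, and therefore $|\eta\log(f_j(z_j)/\vartheta_j)|\le\eta(\log(2/\eta)+C)\to 0$ uniformly. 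Hence $E_\eta\to G_0$ pointwise, and by convexity uniformly on every compact set.

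Third, $G_0$ is convex, lower semicontinuous, and coercive (the weighted $\ell^1$-type penalty dominates at infinity), hence attains its minimum at some $\widehat z_0$. Uniform convergence on compacts combined with coercivity of $G_0$ yields uniform coercivity of $E_\eta$ for small $\eta$, so the family $\{\widehat z_\eta\}$ is bounded. For any cluster point $z_\ast$, the liminf inequality together with optimality gives
\[
G_0(z_\ast)\;\le\;\liminf_{\eta\to 0^+}E_\eta(\widehat z_\eta)\;\le\;\lim_{\eta\to 0^+}E_\eta(\widehat z_0)\;=\;G_0(\widehat z_0),
\]
so $z_\ast$ minimizes $G_0$, and under uniqueness the entire family $\widehat z_\eta$ converges to $\widehat z_0$ (otherwise cluster points fill the minimizing set of $G_0$).

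The genuinely delicate step is the uniform control of the logarithmic perturbation near $z_j=0$, where $f_j$ degenerates at rate $\vartheta_j\eta$ and naively $|\log f_j|$ blows up. The classical cancellation $\eta\log\eta\to 0$ is the crucial ingredient that upgrades pointwise decay of the residual to uniform decay on compacts; once this is in hand, convexity and coercivity do all the remaining work.
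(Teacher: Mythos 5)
The paper does not actually prove this theorem: it is quoted from \cite{calvetti2019hierachical} without an argument, so there is no in-paper proof to compare against line by line. Your proposal is a correct, self-contained proof, and it follows what is essentially the standard (and the cited reference's) route: eliminate $\theta$ by explicit componentwise minimization, observe via the arithmetic--geometric mean inequality that the $\eta=0$ reduction of the penalty is exactly $\sqrt{2}\,|z_j|/\sqrt{\vartheta_j}$, control the residual $-\eta\log(f_j(z_j)/\vartheta_j)$ using the cancellation $\eta\log(1/\eta)\to 0$ (the only place where the degeneracy $f_j(0)=O(\vartheta_j\eta)$ matters), and then pass to the limit of minimizers by convexity and coercivity. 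Two steps deserve a word more care than you give them. First, ``uniform convergence on compacts plus coercivity of $G_0$ yields uniform coercivity of $E_\eta$'' is not true for general functions; it is rescued here either by convexity (a convex function exceeding its central value by $1$ on a sphere grows linearly outside it) or, more directly, by the elementary bound $\psi_\eta(t)\ge \sqrt{2}\,|t|/\sqrt{\vartheta_j}-C\eta\bigl(1+\log(1/\eta)\bigr)$, which follows from $\mu-s\le\eta$ and $s^2/\mu\ge s-\eta$ in your notation and gives an $O\bigl(\eta\log(1/\eta)\bigr)$ rate valid on all of $\R$, not just on compacts. Second, uniqueness of the minimizer of $G_0$ is not automatic when $\mA$ has a nontrivial null space, since the $\ell^1$ penalty is not strictly convex; you flag this correctly, and the theorem as stated implicitly assumes it. Neither point is a gap in substance.
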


This theorem highlights the connection between sparsity-promoting Bayesian hierarchical models and classical $\ell_1$-based sparsity-promoting Tikhonov-type penalties. Moreover, it indicates how the value of the shape parameter should be set: the smaller the value of $\eta>0$, the more strongly the algorithms proposes sparsity. The theorem also shows that the scaling parameters $\vartheta_j$ are related to sensitivity weighting of the components. While the sensitivity weighting scaling (\ref{sens}) can be justified heuristically, its Bayesian interpretation is less straightforward, as it amounts to assigning higher a priori preference for components with lower sensitivity. In \cite{calvetti2019brain}, a Bayesian justification for sensitivity weighting was given in terms of the signal-to-noise ratio (SNR). In the cited work, the concept of SNR-exchangeability was introduced, stating that any subset of components with equal cardinality should have equal probability to explain the data of given SNR level. The SNR  exchangeability  principle automatically leads to a selection rule of the hyperparameters $\vartheta_j$ as
\[
 \vartheta_j = \alpha \frac{{\rm SNR}}{\|a^{(j)}\|^2},
\]
where ${\rm SNR}$ is the estimated signal-to-noise ratio, $a^{(j)}$ is as in (\ref{sens}), and $\alpha$ is a parameter depending on the belief of the cardinality of the solution. The important observation here is that in light of (\ref{ell1}), such choice automatically reproduces the sensitivity  weighting. 

I
Compared to the Tikhonov regularization with Morozov discrepancy principle, in Bayesian hierarchical model, slightly more information than the noise level is required, namely an estimate of the SNR. However, testing of the viability of the Morozov discrepancy principle, it is common to express the noise level in terms of percentages of the noiseless signal, hence it may be argued that the notion of SNR is implicitly built in the method. The connection will be elucidated further in the computed experiments. 
 
 \subsection{Non-convexity and hybrid IAS}
 
The uniqueness and global convergence of the IAS algorithm for the hierarchical model have been established only  for the case $r=1$, when the hyperprior is the gamma distribution. The convexity properties of the objective function for $r \neq 1$ have been analyzed in detail  in  \cite{calvetti2020sparse} where it is shown that for $r < 1$, the objective function in general is not globally convex, but local convexity regions can be identified. Moreover, inside the convex region, faster convergence towards the local minimizer results into very efficient cleaning of the background. Motivated by these observations, a hybrid algorithm was suggested and tested in \cite{calvetti2020sparsity}. The hybrid hierarchical model scheme proceeds with $r=1$ until the MAP estimate is near the global minimum of the globally convex objective function, and subsequently switches to a greedier hyperprior corresponding to with $r<1$ converging fast to a possibly local minimum.

An important question in the hybrid scheme is how to set the hyperprior parameters to guarantee consistency of the two schemes. A compatibility condition, initially proposed in \cite{calvetti2020sparsity} and later rivisited in \cite{calvetti2023computationally}, based on the following two conditions:
\begin{enumerate}
\item When $z_j=0$, the baseline values for $\theta_j$ computed by the two models should coincide, in agreement with the understanding that the a priori variance of the components outside the support of $z$ is independently of the model. 
\item The marginal expected value for the variances should be the same for both models.
\end{enumerate}
  
Denoting by $(r_1,\beta_1,\vartheta_1)$ and $(r_2,\beta_2,\vartheta_2)$ the hyperparameters for the two models employed in the hybrid scheme, with $r_1=1$, we can set the hyperparameters of the second model so as to be in agreement with the compatibility conditions as follows.
\begin{enumerate}
\item The statement on the consistency of the variance of the components in the complement of the support lead to the equation
\begin{equation}\label{compatibility}
 \vartheta_{1,\ell}\left(\beta_{1,\ell} - \frac{k_\ell+2}{2r_1}\right)^{1/r_1} =  \vartheta_{2,\ell}\left(\beta_{2,\ell} - \frac{k_\ell+2}{2r_2}\right)^{1/r_2}.
\end{equation}
\item Recalling the expression for expectation of generalized gamma distributions, the condition on the marginal expected value for the variances implies that
\begin{equation}\label{compatibility2}
\vartheta_{1,\ell}\frac{\Gamma(\beta_{1,\ell} + \frac 1{r_1})}{\Gamma(\beta_{1,\ell})} = \vartheta_{2,\ell}\frac{\Gamma(\beta_{2,\ell} + \frac 1{r_2})}{\Gamma(\beta_{2,\ell})}.
\end{equation} 
\end{enumerate}
Additional conditions on the values of the hyperparameters may be needed to guarantee positivity of some variable combinations as well as finite expectations of the probability density functions. Using the properties of the gamma function, in \cite{calvetti2023computationally}, the formulas for setting the parameters $\vartheta_2$ and $\beta_2$ for the special case $k_\ell = 1$ were worked out in detail for the values $r_2\in\{1/2,-1/2,-1\}$.

\section{Computationally efficient algorithms }

The computational challenges for updating the unknown of interest $z$ as discussed in section~\ref{sec:update z} may be significant when the unknown to be estimated is high dimensional, the computing resources are limited, the problem needs to be solved in real time, or if the forward model matrix $\mA$ is not available, as is the case in matrix-free formulations of the forward model.

\subsection{Computational Complexity}
In problems where the unknown is significantly higher dimensional than the data, i.e., $m\ll n$,  it is possible to reduce the computational complexity of Phase I of the IAS algorithm by taking advantage of the low dimensionality of the data.
In Phase I of each IAS iteration for the update of $z$ it is necessary to solve (\ref{get xi}), which is equivalent to computing 
\begin{equation}\label{LSxi}
\xi = \underset{\xi \in\R^n}{\rm{argmin} }\left\|\left[\begin{array}{c} \mA\mL_\theta^\dagger \\ \mI \end{array}\right] \xi - \left[\begin{array}{c} b \\ 0\end{array}\right]\right\|^2.
\end{equation}
Let 
\[
\mA_\theta = \mA\mL_\theta^\dagger.
\]
The following well-known theorem proves that if $m<n$, the vector $\xi$ satisfying (\ref{LSxi}) can be obtained by solving a lower dimensional adjoint problem \cite{calvetti2000regularizing}.
\begin{lemma}\label{TikWien}
Let $\mA_\theta \in \R^{m \times n}$ and $m<n$, and let $\zeta\in\R^m$ be the  unique solution of the $m \times m$ linear system
\begin{equation}\label{WienEq}
\big( \mA_\theta  \mA_\theta^\mT+\mI_m \big) \zeta =  b.
\end{equation}
Then 
\[
\xi = \mA_\theta^\mT \zeta\in\R^n
\]
is the unique solution of the $n \times n$ linear system
\begin{equation}\label{TikEq}
\big(\mA_\theta^\mT \mA_\theta +\mI_n \big) \xi = \mA_\theta^\mT b.
\end{equation} 
\end{lemma}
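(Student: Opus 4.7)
The plan is to verify the lemma by the standard \emph{push-through} identity for Tikhonov normal equations, which rests on the algebraic fact $\mA_\theta^\mT(\mA_\theta\mA_\theta^\mT + \mI_m) = (\mA_\theta^\mT\mA_\theta + \mI_n)\mA_\theta^\mT$. This identity holds without any rank or dimension assumption, and it is exactly what lets one trade the $n\times n$ system for the $m\times m$ one.

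First I would note that both coefficient matrices are symmetric positive definite: $\mA_\theta\mA_\theta^\mT + \mI_m \succ 0$ and $\mA_\theta^\mT\mA_\theta + \mI_n \succ 0$, because each is the sum of a positive semidefinite matrix and an identity. This immediately gives existence and uniqueness of the solutions $\zeta$ to \eqref{WienEq} and $\xi$ to \eqref{TikEq}. In particular, I do not need the hypothesis $m<n$ or full row rank of $\mA_\theta$ for uniqueness; those assumptions are there only to explain why the reduction from size $n$ to size $m$ is computationally attractive.

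Next, given the unique $\zeta$ solving \eqref{WienEq}, I would simply verify that $\xi := \mA_\theta^\mT \zeta$ satisfies \eqref{TikEq} by direct substitution. Using the push-through identity,
\begin{equation*}
(\mA_\theta^\mT\mA_\theta + \mI_n)\xi = (\mA_\theta^\mT\mA_\theta + \mI_n)\mA_\theta^\mT\zeta = \mA_\theta^\mT(\mA_\theta\mA_\theta^\mT + \mI_m)\zeta = \mA_\theta^\mT b,
\end{equation*}
where the last equality uses \eqref{WienEq}. Combined with uniqueness of the solution to \eqref{TikEq}, this identifies $\xi = \mA_\theta^\mT\zeta$ as exactly that solution.

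There is no real obstacle here; the only item worth stating carefully is the push-through identity itself, which is an immediate one-line calculation, $\mA_\theta^\mT \mA_\theta \mA_\theta^\mT + \mA_\theta^\mT = \mA_\theta^\mT\mA_\theta\mA_\theta^\mT + \mA_\theta^\mT$. If desired, I would also briefly remark on the interpretation: this is the classical equivalence between the primal Tikhonov problem in $\R^n$ and its dual in $\R^m$, and the computational payoff is that when $m \ll n$, forming and factorizing $\mA_\theta\mA_\theta^\mT + \mI_m$ is dramatically cheaper than factorizing $\mA_\theta^\mT\mA_\theta + \mI_n$, while the matrix-vector products $\mA_\theta v$ and $\mA_\theta^\mT w$ needed to assemble $\mA_\theta \mA_\theta^\mT$ or to apply it iteratively can be carried out in matrix-free fashion through $\mA$ and $\mL_\theta^\dagger$.
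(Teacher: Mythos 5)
Your proposal is correct and follows essentially the same route as the paper: multiplying the Wiener system on the left by $\mA_\theta^\mT$ and regrouping via the push-through identity $\mA_\theta^\mT(\mA_\theta\mA_\theta^\mT+\mI_m) = (\mA_\theta^\mT\mA_\theta+\mI_n)\mA_\theta^\mT$. Your added remarks on positive definiteness guaranteeing uniqueness (and on $m<n$ being relevant only for computational savings, not for validity) are correct and merely make explicit what the paper leaves implicit.
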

\begin{proof}
Multiply (\ref{WienEq}) from the left by $\mA_\theta^\mT $ to get 
\begin{eqnarray*}
 \mA_\theta^\mT b &=& \mA_\theta^\mT\big( \mA_\theta  \mA_\theta^\mT+\mI_m \big) \zeta \\
 &=& \big( \big(\mA_\theta^\mT \mA_\theta \big) \mA_\theta^\mT + \mI_n\mA_\theta^\mT \big) \zeta \\
 &=& \big( \mA_\theta^\mT  \mA_\theta+\mI_n \big) \mA_\theta^\mT \zeta,
\end{eqnarray*}
therefore $\mA_\theta^\mT \zeta$ is the unique solution of (\ref{TikEq}).
\end{proof}
In the remainder of the paper we will refer to (\ref{TikEq}) as Tikhonov equations and to the adjoint problem (\ref{WienEq})  as Wiener equations. It follows from Lemma~\ref{TikWien} that mathematically the update of $z$ in Phase I of the IAS algorithm can be done using either the (\ref{TikEq}) or the (\ref{WienEq}) formulation. Computationally, when $m<n$ it may be preferable to compute first the solution of the Wiener equations because it will reduced the computational complexity. The more severely underdetermined the problem, the higher the complexity reduction. 

\subsection{ Krylov-Lanczos least squares solution}   

The numerical solution of linear inverse problems must address the fact that all computations are performed in finite precision arithmetic, and that the memory allocation may not allow the storage of either  $\mA_\theta  \mA_\theta^\mT$ or  $\mA_\theta^\mT  \mA_\theta$. Moreover, in many applications the matrix $\mA_\theta$ is not explicitly given, but it is possible to multiply a vector by it or its transpose. 
Iterative linear solvers,  an alternative to direct solution methods based on a factorization of the coefficient matrix that only access the matrix or its transpose to perform matrix-vector products, have long been the methods of choice for large or matrix-free problems. 

The literature of iterative linear solvers is very vast. In this paper we limit our attention to Krylov subspace iterative solvers. Given a square linear system 
\begin{equation}\label{KrylovLS}
\mM x = c, 
\end{equation}
the $k$th Krylov subspace associated with $\mM$ and $c$ is, by definition,
\[
\mathcal{K}_k(\mM,c) = \rm{span} \{ c, \mM c, \ldots, \mM^{k-1}c\}, \quad k=1,2,\ldots.
\]
Given an initial approximate solution $x_0$ and corresponding initial residual error $r_0= c - \mM x_0$, a Krylov subspace iterative solvers for (\ref{KrylovLS}) determines a sequence of approximate solutions $x_k \in \mathcal{K}_k(\mM,r_0)$ that minimize a given functional in the Krylov subspace. Different iterative solvers are characterized by the objective function. One of the most popular Krylov subspace iterative solver for symmetric positive definite systems is the Conjugate Gradient (CG) method, introduced in 1952 \cite{hestenes1952methods} as an alternative to Gaussian elimination. At each iteration, the approximate solution determined by the CG method minimizes the energy norm of solution the error, i.e.,
\[
x_k = \underset{x\in \mathcal{K}_k(\mM,c)} {\rm argmin} \| x -  x_*\|_\mM^2 , \quad \| x -  x_*\|_\mM^2  = (x-x^*)^\mT \mM (x-x^*), \quad x_*= \mM^{-1}c.
\]
The seminal 1952 paper by Hestenes and Stiefel  also proposes a variant of the conjugate gradient method for linear systems (\ref{KrylovLS}) that arise as the normal equations associated with linear least squares problems $\mA x = b$, i.e., $\mM = \mA^\mT \mA$, and $c = \mA^\mT b$. The numerical stability of algorithms implementing CG for least squares problems is addressed in \cite{paige1982lsqr}; see also \cite{bjorck1996numerical}. Since the functional minimized by the iterates of the Conjugate Gradient for Least Squares (CGLS) iterative solver over a nested sequence of Krylov subspaces is the Euclidean norm of the discrepancy, $\| b - \mA x\|$, the CGLS method has been used extensively in the iterative solution of linear inverse problems using the iteration index as a regularization tool \cite{hanke1993regularization,hanke2017conjugate}. 

In the same year as the CG method was proposed, the question how to solve real symmetric linear systems by minimized iterations was being addressed from a different point of view by Lanczos \cite{lanczos1952solution}, who approached it by introducing families of orthogonal polynomials with respect to an inner product defined by the coefficient matrix and the right-hand side vector; see \cite{golub2009matrices} for an overview and extensive bibliography.

The algorithm proposed by Lanczos for determining an orthonormal basis of Krylov subspaces only accessing the matrix to form matrix-vector products, uses the three-term recurrence relations of the underlying orthogonal polynomials, thus determining an orthogonal tridiagonalization of the coefficient matrix.  Equivalently, at the $k$th iteration the Lanczos process determines an orthonormal basis of the $k$th  Krylov subspace and the tridiagonal projection of the restriction of the coefficient matrix to the subspace. It can be shown that the eigenvalues of the tridiagonal matrix are approximations of the eigenvalues of the original matrix, therefore the Lanczos process is often used for numerical studies of eigenvalue problems. More specifically, given the linear system 
\[
 \mM y = c, \quad \mbox{$\mM\in\R^{m\times m}$ symmetric.}
\]
$\ell$ steps of Lanczos process determines a set of orthonormal vectors $v_1,v_2,\ldots,v_{\ell}$ such that
\[
 {\mathcal K}_\ell(c,\mM)  = {\rm span}\big\{v_1,v_2,\ldots,v_{\ell}\big\}.
\]

The organization of the computations is summarized in the following algorith,

\bigskip
\hrule
\medskip

{\bf Lanczos tridiagonalization algorithm}

\medskip
\hrule
\medskip

 \begin{enumerate}
 \item {\bf Given} $\mM\in\R^{m\times m}$ symmetric, $0\neq c\in\R^m$.
 \item {\bf Initialize} $v_1 = c/\|c\|, \; \gamma_0=0; \; v_0 = v_1 $.
 \item {\bf Repeat} for $j = 1, 2,\ldots,\ell$:
 \begin{enumerate}
 \item $w = \mM v_j$,
 \item $w = w - \gamma_{j-1} v_{j-1}$, 
\item $\alpha_{j-1} = v_j^\mT w$,
\item $w = w - \alpha_{j-1} v_j$,
\item $\gamma_j = \|w\|$,
\item if $\gamma_j=0$, exit, else $v_{j+1} = w/\gamma_j$.
 \end{enumerate}
% \item {\bf end}
 \end{enumerate}
 
\medskip
\hrule
\medskip

The output of $\ell$ steps of the Lanczos tridiagonalization algorithm is the $\ell+1$ orthonormal vectors $v_1, \ldots, v_{\ell+1}$, the scalars $\alpha_1, \ldots, \alpha_\ell$ and the positive scalars $\gamma_1, \ldots, \gamma_\ell$.  The matrices 
\[
\mV_{\ell +1} = [\begin{array}{c c c} v_1& \ldots & v_{\ell+1}  \end{array}]  \in\R^{m\times (\ell+1)}, \quad 
\mT_{\ell+1, \ell} = \left[ \begin{array}{ c c c c c} \alpha_0 & \gamma_1 & &  &\\  \gamma_1 & \alpha_1 & \gamma_2 & & \\ & \ddots & \ddots & \ddots & \\
& &\gamma_{\ell-2}& \alpha_{\ell-2} & \gamma_{\ell-1}    \\ &  & & \gamma_{\ell-1} & \alpha_{\ell-1} \\
%& & & & \gamma_{\ell} \\
 & & & &\gamma_\ell
\end{array}\right] \in\R^{(\ell +1) \times \ell},
\]
satisfy  the identity
\begin{equation}\label{BV1}
\mM \mV_\ell  = \mV_{\ell+1}\mT_{\ell+1,\ell} 
 = \mV_{\ell} \mT_{\ell} + \gamma_\ell v_{\ell+1} e_\ell^\mT,
\end{equation}
holds, where $\mT_\ell$ is the $\ell\times\ell$ block of first $\ell$ rows of $\mT_{\ell,\ell+1}$ and $e_\ell\in\R^m$ is the canonical $\ell$th unit vector. If $\gamma_\ell=0$ then
\[
\mM \mV_\ell = \mV_\ell \mT_\ell,
\]
hence the eigenvalues of $\mT_\ell$ are eigenvalues of $\mM$.

Since the Lanczos vectors $v_1, \ldots, v_{\ell}$ are an orthonormal basis of $\mathcal{K}_\ell (c,\mM)$,  every vector $y \in \mathcal{K}_\ell (c,\mM)$ can be written as $y = \mV_\ell z$ for some $z \in \R^\ell$. 
In particular, the $\ell$th approximate solution   $y^{(\ell)}$ of   
\[
 \big(\mM + \mI_m) y = c, 
\]
satisfying 
\begin{equation}\label{y_ell}
y^{(\ell)} = \underset{y \in {\mathcal{K}}_\ell(c, \mM)}{\mathrm{argmin}}
 \| (\mM + \mI_m) y -c \|^2
\end{equation}
must be of the form $\mV_\ell z^{(\ell)}$. We observe that in the light of (\ref{BV1}) the quantity to be minimized can be written as
\begin{eqnarray*}
 \| (\mM + \mI_m) \mV_\ell z - c \|^2  &=& \big\| \mV_\ell  (\mT_\ell  +\mI_\ell) z + \gamma_\ell v_{\ell+1} z_\ell  -\|c\|v_1\big\|^2 \\
                         & = & \big\| (\mT_\ell+ \mI_\ell) z  - \|c\| e_1 \big\|^2  + \gamma_\ell^2 z_\ell^2
                  %       & = & \| (\mT + \mI) z - \|b\| e_1 \|^2, \textcolor{red}{(write better)}
 \end{eqnarray*}
where the last identity follows from the orthonormality of the columns of $\mV_{\ell+1}$. Then $y^{(\ell)} = \mV_\ell z^{(\ell)}$, with  
 \begin{equation}\label{z_ell}
  z^{(\ell)} = \underset{z\in\R^\ell}{\mathrm{argmin}}  \big\| (\mT_{\ell+1,\ell}+ \mI_{\ell+1,\ell}) z  - \|c\| e_1 \big\|^2, 
 \end{equation}
 where the matrix $\mI_{\ell+1,\ell}$ consists of the first $\ell$ columns of the $(\ell+1) \times (\ell+1)$ identity $\mI_{\ell+1}$ and $e_1$ is the first  column of $\mI_{\ell+1}$. %is tridiagonal.  
 We remark that if $\gamma_\ell = 0$ the Lanczos process terminates: this is guaranteed to happen when $\ell = m$, but it may be reached for smaller values of $\ell$. 

Finally, notice that for a given vector $c$, the Lanczos vectors remain invariant under addition of a scalar multiple of the identity to the matrix $\mM$. For this reason, the Lanczos process has been used extensively to determine the value of the regularization parameters in the context of Tikhonov regularization; see, e.g., \cite{calvetti2000tikhonov, chung2021computational}.

The orthogonality of the Lanczos vectors may not be guaranteed in finite precision arithmetics, due to the accumulation of roundoff errors, sometimes making their reorthogonalization necessary.  Furthermore, it has been observed \cite{paige1982lsqr} that when the symmetric matrix $\mM$ is of the form $\mM = \mA \mA^\mT$ or $\mM = \mA^\mT \mA$, the orthogonality of the Lanczos vector can be preserved better by working separately with the matrices $\mA$ and $\mA^\mT$ rather than forming their product. The resulting algorithm, known as Lanczos bidiagonalization, determines two sets of orthonormal vectors and a bidiagonal matrix. We refer to \cite{paige1982lsqr} for the details on how to organize the Lanczos bidiagonalization for the solution of least squares problems of the type (\ref{TikEq}) and (\ref{WienEq}). 
For completeness, we state the Lanczos bidagonalization algorithm suitable for (\ref{WienEq}).

\bigskip
\hrule
\medskip

{\bf Lanczos bidiagonalization algorithm}

\medskip
\hrule
\medskip

 \begin{enumerate}
 \item {\bf Given} $\mA \in\R^{m\times n}, \quad 0\neq b\in\R^m,\; 1<\ell <\min(m,n);$
 \item {\bf Initialize} $\sigma_1= \|b\|;\; v_1= b/\sigma_1; \;  \tilde{u} =\mA^\mT v_1; \; \rho_1= \| \tilde{u}\|; \; u_1 = \tilde{u}/\rho_1$.
 \item {\bf Repeat} for $j = 1,2,\ldots,\ell$
 \begin{enumerate}
 \item $\tilde{v}_j = \mA u_{j-1} - \rho_{j-1}v_{j-1}, \quad \sigma_j = \|\tilde{v}_j, \quad v_j = \tilde{v}_j/\sigma_j$;
 \item $\tilde{u}_j = \mA^\mT v_j - \sigma_j u_{j-1}, \quad \sigma_j = \|\tilde{u}_j \|, \quad u_j = \tilde{u}_j/\sigma_j$; 
 \end{enumerate}
 %\item {\bf end}
 \item {\bf Compute}  $\tilde{v}_{\ell+1} = \mA u_\ell - \rho_\ell v_\ell,$ 
 \quad $\sigma_{\ell+1}= \|\tilde{v}_{\ell+1}\|, $
 \quad $v_{\ell+1}= \tilde{v}_{\ell+1} /\sigma_{\ell+1}$.
 \end{enumerate}

\medskip
\hrule
\medskip

One can show that, by construction,  the algorithm produces matrices
\[
\mU_\ell =[ \begin{array}{ c c c} u_1 & \ldots & u_\ell \end{array} ], \quad \mV_\ell =[ \begin{array}{ c c c} v_1 & \ldots & v_\ell \end{array} ],
\]
and
\[
\mC_\ell = \left[ \begin{array}{ c c c c c} \rho_1 &  & &  &\\ \sigma_1 & \rho_2 & & & \\ & \ddots & \ddots & &  \\ & & \sigma_{\ell-1} & \rho_{\ell-1} \\ &&&\sigma_\ell & \rho_\ell
\end{array}\right].
\]
satisfying the identities
\begin{eqnarray*}
\mA \mU_\ell &= &\mV_\ell \mC_\ell + \sigma_{\ell+1}v_{\ell+1} e_\ell^\mT\\
 \mA^\mT \mV_\ell &= &\mU_\ell \mC_\ell^\mT.
\end{eqnarray*}
In particular, we observe that if $\mM = \mA\mA^\mT$, 
\begin{eqnarray}\label{BV2}
 \mM \mV_\ell&=& \mA \mA^\mT \mV_\ell   \nonumber\\
 &=&  \mA \mU_\ell \mC_\ell^\mT  \nonumber  \\
 &=&V_\ell \mC_\ell\mC_\ell^\mT + \sigma_{\ell+1}v_{\ell+1} \big(\mC_\ell e_\ell\big)^\mT \nonumber \\
  &=&V_\ell \mC_\ell\mC_\ell^\mT + \sigma_{\ell+1} \rho_\ell v_{\ell+1} e_\ell^\mT.
\end{eqnarray}
By comparing the identities (\ref{BV1}) and (\ref{BV2}) we observe that the  Lanczos bidiagonalization algorithm produces the Cholesky factorization of the tridiagonal matrix $\mT_\ell$. The $\ell$th approximate solution $y^{(\ell)}$ can be computed from the output of the Lanczos 
bidiagonalization in a manner similar to that described for the Lanczos tridiagonalization, hence we omit the details.  

A natural question that arises is how many steps of the Lanczos process should be run. Ideally, in order to compute update of  $z$ exactly we need to run the Lanczos process until (\ref{TikEq}) or  (\ref{WienEq}) are solved exactly. It can be shown that the last subdiagonal entry of the   Lanczos tridiagonal matrix $\sigma_\ell$ is equal to the norm of the residual error of the adjoint equations
\[
r_\ell = b - (\mA \mA^\mT + \mI_m) \mV_\ell y^{(\ell)},
\]
therefore the process can continued until the adjoint problem has been solved to a desirable accuracy.  

We conclude this section with the observation that if computer memory is limited, it is possible to compute the $\ell$th approximate solution of the Lanczos process storing only two orthogonal vectors and the nonzero entries of the tridiagonal, or bidiagonal matrix. The cost of keeping the memory free is that once enough steps of the process have been completed and the vector $z^{(\ell)}$ has been computed, the solution vector is formed by adding together the vectors $v_j$ scaled by the corresponding component of $z^{(\ell)}$. In this case, however, there is no way to overcome the potential loss of orthogonality of the Lanczos vectors with reorthogonalization.

\subsection{Computation of the pseudoinverse of L}\label{sec:qr}

The updating  of the variable $\xi$ in the IAS algorithm is based on the use of the pseudoinverse of the matrix $\mL_\theta$ that needs to be recomputed every time $\theta$ is updated. If 
\[
\mL_\theta = \mQ_\theta \mR_\theta
\]
is the lean QR factorization of $\mL_\theta$, then the pseudoinverse of $\mL_\theta$ is
\[
\mL_\theta^\dagger = \mR_\theta^{-1} \mQ_\theta^\mT.
\]
For small  and midsize problems the lean QR algorithm may be practical, however for larger problems this step may turn out to become a computational bottleneck. As pointed out in \cite{pragliola2022overcomplete}, the computation of the QR factorization can be avoided, because it is not the matrix that is needed, but only the matrix-vector product with the pseudoinverse and its transpose.

In fact, the product $z = \mL_\theta^\dagger \xi$ can be computed by solving the normal equations
\[
 \big(\mL_\theta^\mT \mL_\theta \big) z  = \mL_\theta^\mT \xi,
\]
and the product  $\zeta = \big(\mL_\theta^\dagger\big)^\mT y$ can be computed by solving
\[
  \big(\mL_\theta^\mT \mL_\theta\big) z  = y, \quad \zeta = \mL_\theta z,
\]
taking advantage of the sparsity of the matrix $\mL$.
As demonstrated in the computed examples in the next section, this implicit way of applying the pseudoinverse may  have a significant impact on the computing times.  

\section{Computed examples}

In this section, we consider two computed examples. The first one discusses the selection of the Tikhonov regularization parameter via the Bayesian interpretation, and the second one a strongly underdetermined inverse problem with sparsity constraint.

\subsection{Tikhonov regularization reinterpreted}

To put the discussion of this work in context of Tikhonov regularization, we consider the case of trivial partitioning, that is, the parameter vector $\theta$ comprises one single component, $\theta\in\R_+$. We consider a linear model,
\[
 b = \mA x + \varepsilon, \quad \varepsilon\sim{\mathcal N}(0,\sigma^2 \mI_m).
\]
 We adopt here the definition of the signal-to-noise ratio as
 \[
  {\rm SNR} = \frac{{\mathbb E}(\|b\|^2)}{{\mathbb E}(\|\varepsilon\|^2)}.
 \] 
where ${\mathbb E}$ stands for expectation. In the classical non-Bayesian framework, in lack of better model, we may simply write
\[
 {\mathbb E}\big(\|b\|^2\big) = \|b\|^2, 
 \]
 and for the noise,
 \[
 \quad {\mathbb E}(\|\varepsilon\|^2) = {\rm trace}\big\{{\mathbb E}\big(\varepsilon \varepsilon^\mT\big) \big\}=  {\rm trace}\big\{
\sigma^2 \mI_m
 \big\} =
 m\sigma^2,
\] 
leading to the interpretation
\begin{equation}\label{SNR1}
 {\rm SNR} =  \frac{\|b\|^2}{m\sigma^2}.
\end{equation}
In the Bayesian context, on the other hand, we use the prior distribution to estimate the SNR. Since
\[
 X\mid\Theta \sim{\mathcal N}(0,\theta \mI_n),
\]
we write
\begin{eqnarray*}
 {\mathbb E}(\|\mA X\|^2\mid\theta) &=& {\rm trace}\big\{{\mathbb E}\big(\mA X X^\mT \mA^\mT \mid \theta\big)\big\} \\
 &=& {\rm trace}\big\{\theta\mA \mA^\mT \big\}  = \theta \|\mA\|_F^2,
\end{eqnarray*}
and assuming the hypermodel  $\Theta\sim{{\rm Gamma}(\beta,\vartheta})$, we obtain
\[
  {\mathbb E}(\|\mA X\|^2)  = \|\mA\|_F^2 {\mathbb E}(\Theta) = \beta\vartheta\|\mA\|_F^2.
\]  
This reasoning leads to the interpretation
\begin{equation}\label{SNR2}
 {\rm SNR} = \frac{\beta\vartheta \|\mA\|_F^2 + m\sigma^2}{m\sigma^2}.
\end{equation}
Therefore, if estimates of the noise variance $\sigma^2$ and the SNR are given, a reasonable choice for the scaling parameter value $\vartheta$ is
\begin{equation}\label{vartheta}
 \vartheta = \frac {m\sigma^2}{\beta\|\mA\|_F^2}({\rm SNR} - 1).
\end{equation}

To compare the results obtained by traditional Tikhonov regularization and the hierarchical models, we run the following test. Consider a mildly ill-posed problem of numerical differentiation of a function $u:[0,1]\to\R$ satisfying $u(0) = 0$. The data comprises noisy observations of the function at regular grid points,
\[
 b_j = u(t_j) + \varepsilon_j, \quad t_j = \frac jn,\quad 1\leq j\leq n,
\]
and we seek to estimate the values $x_j = u'(t_j)$ at the same points. To interpret the problem as an inverse problem, we write
\[
 u(t_j) = \int_0^{t_j} u'(s)ds,
\]
and we discretize the problem by approximating
\[
 u(t_j) \approx \frac 1n\sum_{\ell=1}^j u'(s_\ell), \quad s_\ell = \frac\ell n,
\]
leading to an approximate discrete problem
\[
 b = \mA x + \varepsilon, \quad \mA = \frac 1n \left[\begin{array}{cccc} 1 & & & \\ 1 & 1 & & \\ \vdots & & \ddots & \\ 1 & 1 & \cdots & 1\end{array}\right]\in\R^{n\times n},
\]   
where $m=n$. 
To compare the two approaches, we run both the traditional Tikhonov regularization with Morozov discrepancy principle and the corresponding hierarchical model. To this end, we define the relative noise level $\sigma_{\rm rel}$, $0<\sigma_{\rm rel}<1$, and the absolute noise level by
\[
 \sigma^2 = \frac 1m\|b_0\|^2  \sigma_{\rm rel}^2, 
\]
where $b_0$ is the simulated noiseless signal. The simulated noisy signal is generated by setting
\[
 b = b_0 + \sigma w, \quad w\sim{\mathcal N}(0, \mI_m).
\]  
We then solve the inverse problem numerically using the Tikhonov regularization, setting the regularization parameter with Morozov discrepancy principle based on the condition
\begin{equation}\label{Morozov 2}
h(\alpha) =  \| \mA x_\alpha - b\|^2 = m\sigma^2 = {\mathbb E}\big(\|\varepsilon\|^2\big).
\end{equation}

To run the corresponding hierarchical model, we use the same data $b$, assuming that  the noise level $\sigma$ is known. To obtain an estimate for the SNR, we use simply formula (\ref{SNR1}) and substitute it in (\ref{vartheta}) to obtain the value of the scaling parameter $\vartheta$. In our example, the shape parameter $\beta$ is set to value $\beta = 3/2+\eta$ with $\eta =0.0001$. Observe that for low noise levels, we have $\|b\|^2 \approx \|b_0\|^2$, implying that formula (\ref{SNR1}) gives
\[
 {\rm SNR} \approx \frac{\|b_0\|^2}{m\sigma^2} = \frac 1{\sigma_{\rm rel}^2},
\]
however, we do not use this observation in our calculations. 

In our numerical experiment, we consider numerical differentiation of the function
\[
 u(t) = 1 + {\rm erf}(6t + 3), \quad 0<t<1,
\]
where ${\rm erf}$ denotes the error function. Observe that $u(0) = 1 + {\rm erf}(-3) \approx 2.2\times 10^{-5}$, so the assumption $u(0) = 0$ is valid with sufficient precision.  The analytic  solution of the differentiation problem is
\[
 u'(t) = \frac{12}{\sqrt{\pi}}e^{-(6t-3)^2}.
\]
We discretize the problem using the regular grid $t_j = j/n$ with $n = 50$, and use the second order smoothness prior, with 
\[
 \mL = \left[\begin{array}{rrrrr} -2 & 1 & & & \\ 1 & -2 & 1 & & \\ &\ddots &\ddots & \ddots & \\ &&1 &-2 &1
 \\ &&&1 &-2 \end{array}\right]\in\R^{n\times n}.
\]  
In our simulation, we pick 30 logarithmically uniformly distributed values of the relative noise level parameter $  \sigma_{\rm rel} \in[0.001,0.1]$ and solve the Tikhonov regularized solution using both the Morosov discrepacy principle and the IAS algorithm to select the regularization. 
To solve $\alpha$ numerically from the condition (\ref{Morozov 2}), we use the geometric bisection principle, choosing the new value of the regularization parameter in the current interval $[\alpha_{\rm min},\alpha_{\rm max}]$, where $h(\alpha_{\rm min})<m\sigma^2 \leq h(\alpha_{\rm max})$,  by
\[
 \alpha = \sqrt{\alpha_{\rm min} \alpha_{\rm max}},
\]
stopping the iterations when $|\alpha - \sqrt{m}\sigma|< 0.01\,m \sqrt{\sigma}$. The initial interval is set as $\alpha_{\rm min} = 10^{-16}$ and $\alpha_{\rm max} = 10^{10}$. We denote by $\alpha_{\rm Tikh}$ the regularization parameter obtained by the Morozov discrepancy principle.

%To run the IAS algorithm, we set the shape parameter as $\beta = 3/2 + \eta$, and set $\eta =0.0001$. The scaling parameter is set according to the SNR criterion discussed above. 
The IAS iteration is initiated at value $\theta = \vartheta$, and the stopping criterion is set as $|\theta^j - \theta^{j-1}|/|\theta^{j-1}<0.01$.  When the iterations terminate, we define the corresponding regularization parameter as
\[
 \alpha_{\rm IAS} = \frac{\sigma}{\sqrt{\theta}}.
\]  

\begin{figure}[ht!]
\centerline{\includegraphics[width=15cm]{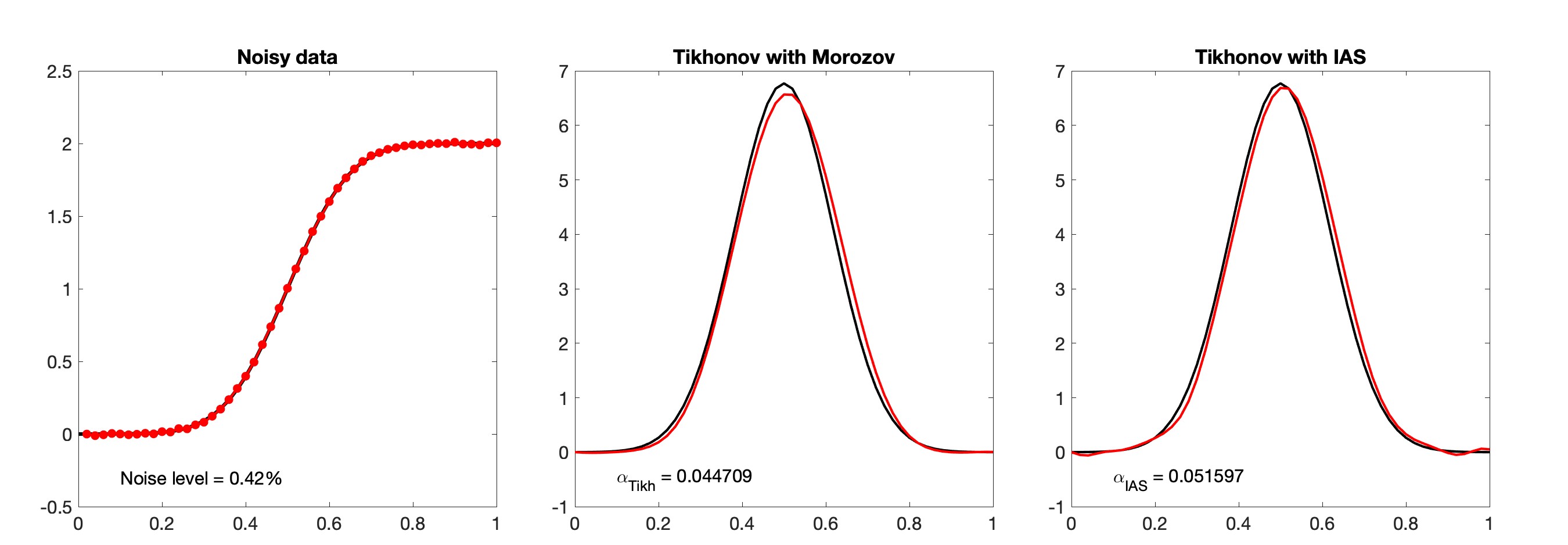}
}
\centerline{\includegraphics[width=15cm]{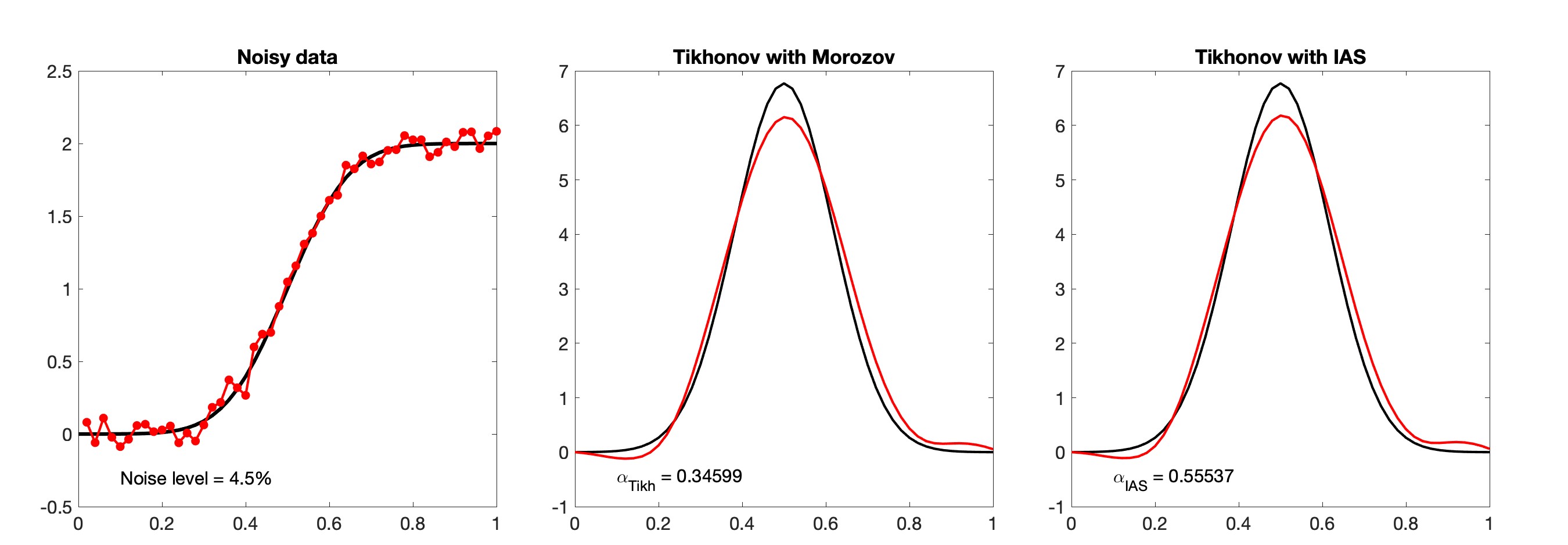}
}
\caption{\label{fig:Tikh results} Tikhonov-regularized solutions of the numerical differentiation using the two different methods for choosing the regularization parameter. The left panels show the noisy data, the middle panels the solutions based on Morozov discrepancy principle, and on the left, the solutions using the IAS-based criterion for the regularization parameter.}
\end{figure}

\begin{figure}[ht!]
\centerline{\includegraphics[width=17cm]{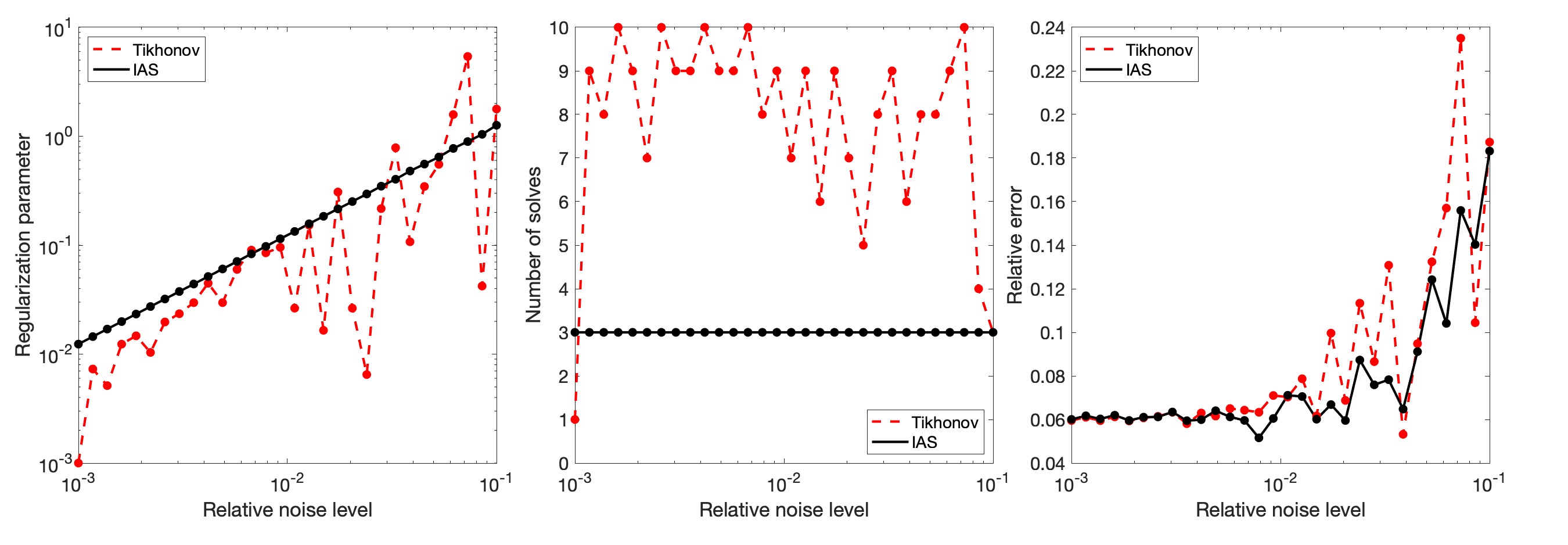}
}
\caption{\label{fig:Tikh summary} The left panel shows the logarithmic plot of the regularization parameter values using the two selection criteria as a function of relative noise level. The middle panel shows the number of times that the linear system needs to be solved for estimating the parameter, and on the right, the relative reconstruction error using the two methods are shown.}
\end{figure}

Figure~\ref{fig:Tikh results} shows two arbitrarily chosen noise level results using both selection criteria. We see that the regularization parameter values are in both cases comparable, and the computed estimates have no significant difference. In Figure~\ref{fig:Tikh summary}, the parameters are plotted as a function of the relative noise level. We observe that $\alpha_{\rm IAS}$ grows monotonically as a function of $\sigma_{\rm rel}$ following a clear power law, while the behavior of the parameter $\alpha_{\rm Tikh}$ is less predictable. Surprisingly, the growth is not even monotonic.  The middle panel of the same figure shows the number of solutions of the linear system of equations need to reach the prescribed convergence. While in both cases, the number is moderate, the IAS algorithm requires consistently only three iterations regardless of the noise level, while the Morozov discrepancy principle requires up to ten iterations. Again, the result for the traditional method is less predictable. Finally, we consider the performance of the two methods in terms of the relative error, $\|x_\alpha -x_{\rm true}\|/\|x_{\rm true}\|$. Both regularization methods perform in a comparable manner, the IAS estimate giving slightly lower error estimates than the discrepancy-based solution.

\subsection{Sparse reconstructions}
% The results are computed with programs TikhonovWiener/BKTomographyNoQR_DC.m TikhonovWiener/TomographyExample.m

In the following example, we consider a fan beam tomography problem in two dimensions. A circular object $\Omega\subset\R^2$ is illuminated by a point-like X-ray source sending a fan of rays through the object. On the opposite side of the target, the attenuated intensities of the rays are registered. The measurements are repeated with several transmitter-receiver positions obtained by rotating the projection angle, as shown schematically in Figure~\ref{fig:tomoconfig}. The goal is to reconstruct the density distribution $\rho$ of the object $\Omega$, assuming that the attenuation of a ray traversing the object follows the Beer-Lambert law,
\[
 dI = \mbox{attenuation over $[s,s+ds]$} = - I (s)\rho\big(x(s)\big) ds,
\]
where $x = x(s)$ is the arc length parametrization of a given ray. By integrating the above formula over the ray, measured intensity  $I$ of the ray of length $L$ at the receiver satisfies
\[
   \log\frac{I}{I_0} = -\int_0^L \rho\big(x(s)\big) ds, \quad \mbox{$I_0$ = intensity of the cource,}
\]
hence the data consist of integrals of the unknown density over a set of rays. We discretize the density by approximating it by a piecewise linear function over a triangularization of $\Omega$ (see Figure~\ref{fig:tomoconfig}), the degrees of freedom being the nodal values of the density, $x_j = \rho(v_j)$, $1\leq j\leq n$.

\begin{figure}[ht!]
\centerline{
\includegraphics[width=7.5cm]{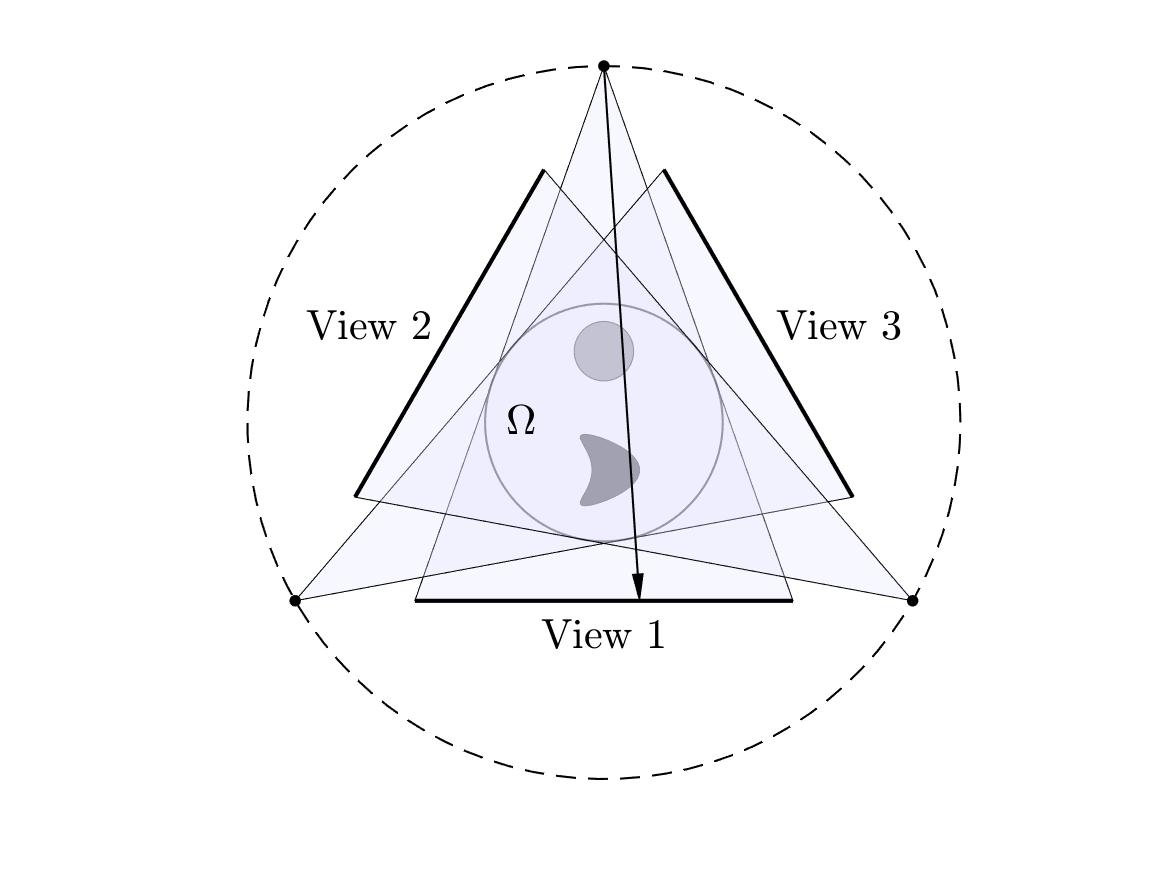}
\includegraphics[width=7.5cm]{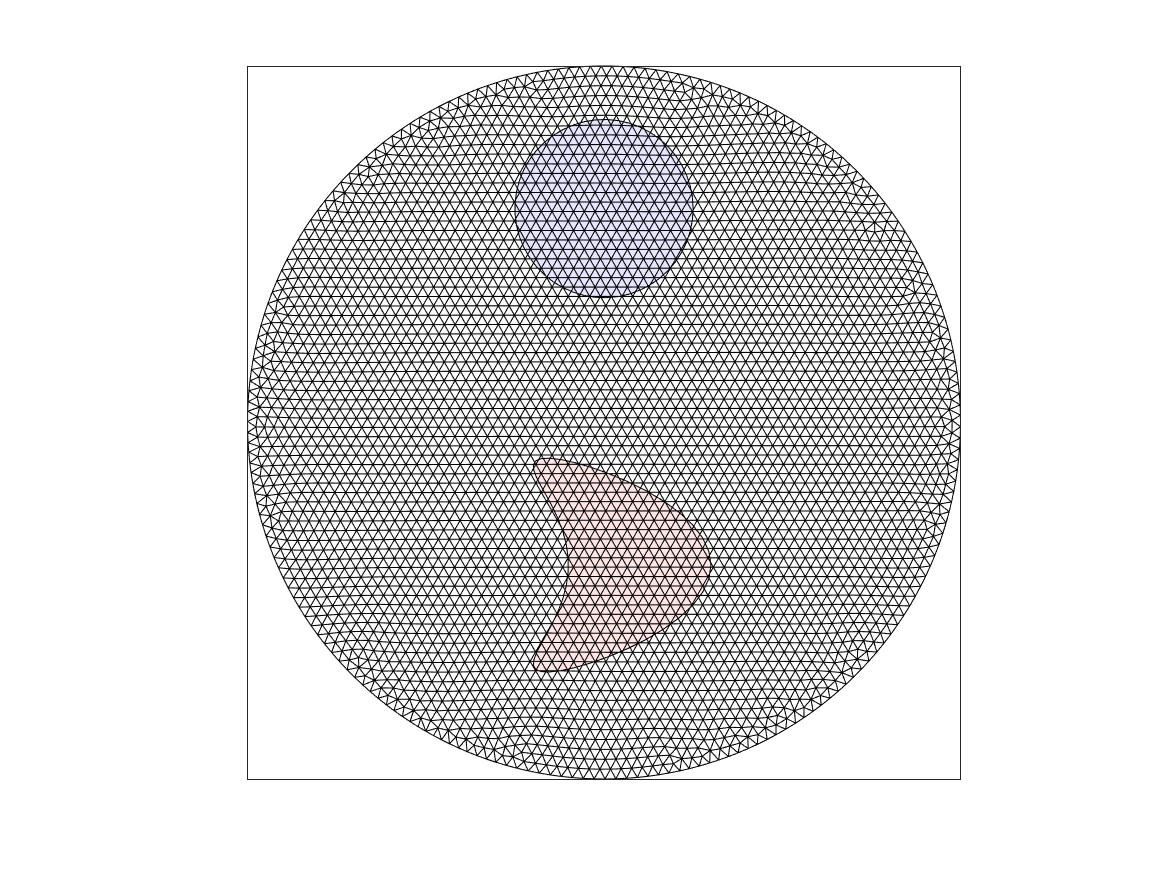}
}
\caption{\label{fig:tomoconfig} The geometric configuration of the fan beam tomography problem is shown schematically on the left, where three projection views are included. In the panel on the right, the true target is superimposed on the discretization mesh used in the inverse problem. The true densities of the inclusions are $\rho = 1.2$ in the pink kite shaped inclusion and $\rho=1$ in the blue  circular inclusion.
The number of interior nodes in this mesh is $n = 3\,821$, which is the number of unknowns in the inverse problem.}
\end{figure}

In this problem, we implement a first order smoothness prior matrix $\mL$ such that if $x$ contains the nodal values of the density, the vector $y = \mL x$ contains the increments along each edge in the mesh. Thus, each row of $\mL$ contains two non-zero values $\pm 1$. To guarantee that $\mL$ is full rank, we set the nodal values to zero, and delete the rows corresponding to boundary edges and columns corresponding to boundary nodes. This way, $\mL\in\R^{k\times n}$, and the number of degrees of freedom in the inverse problem is $n = 3\,821$, and the number of non-boundary edges is $k = 11\,872$, which is the number of unknowns in the IAS algorithm.

We generate a target density distribution consisting of two inclusions of constant density as shown in Figure~\ref{fig:tomoconfig}. The integrals along the rays in this case can be computed exactly, as the boundary curves of the inclusions are known and the densities are constant. We assume that each projection comprises $m_1 = 300$ rays, and $m_2 = 15$ equally distributed projection angles are included, implying that the dimension of the data vector $b$ is $m = m1\times m_2 = 1\,500$. Finally, we corrupt the data by additive Gaussian scaled white noise with noise level equal to 1\% of the maximum of the noiseless data.

We then run the IAS algorithm with the hyperparameter choices  $\eta = 0.001$ and  $\vartheta_j = 0.05$. In the tomography problem, the sensitivity of the data to different components does not change dramatically, so including the detailed sensitivity analysis is not as critical as in problems where ,e.g., distance to receivers count.  
Figure~\ref{fig:tomoresults} shows the density reconstructions of the six first iterations. After six iterations, the reconstructions do not change significantly.

\begin{figure}[ht!]
\centerline{
\includegraphics[width=13cm]{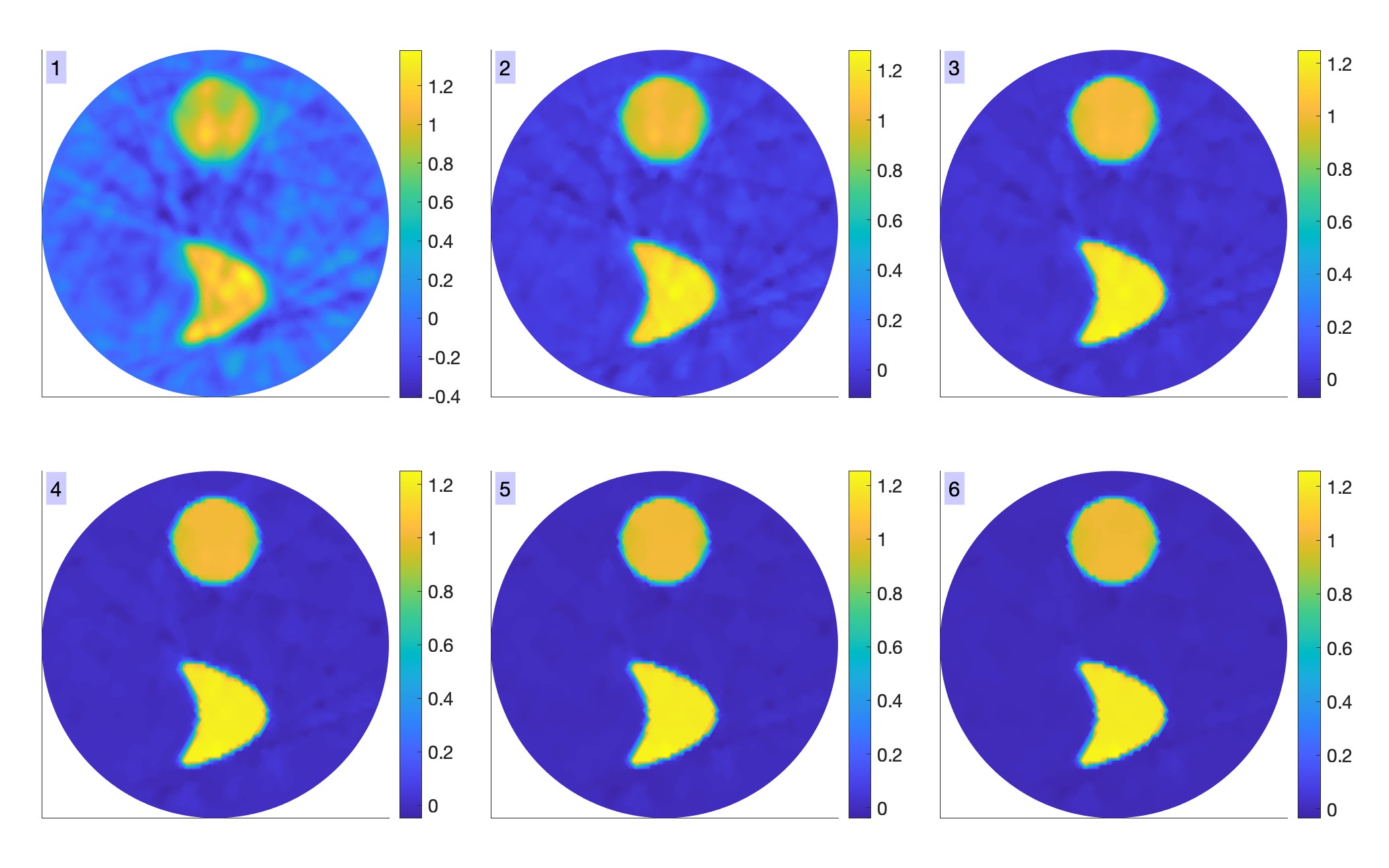}}
\centerline{
\includegraphics[width=13cm]{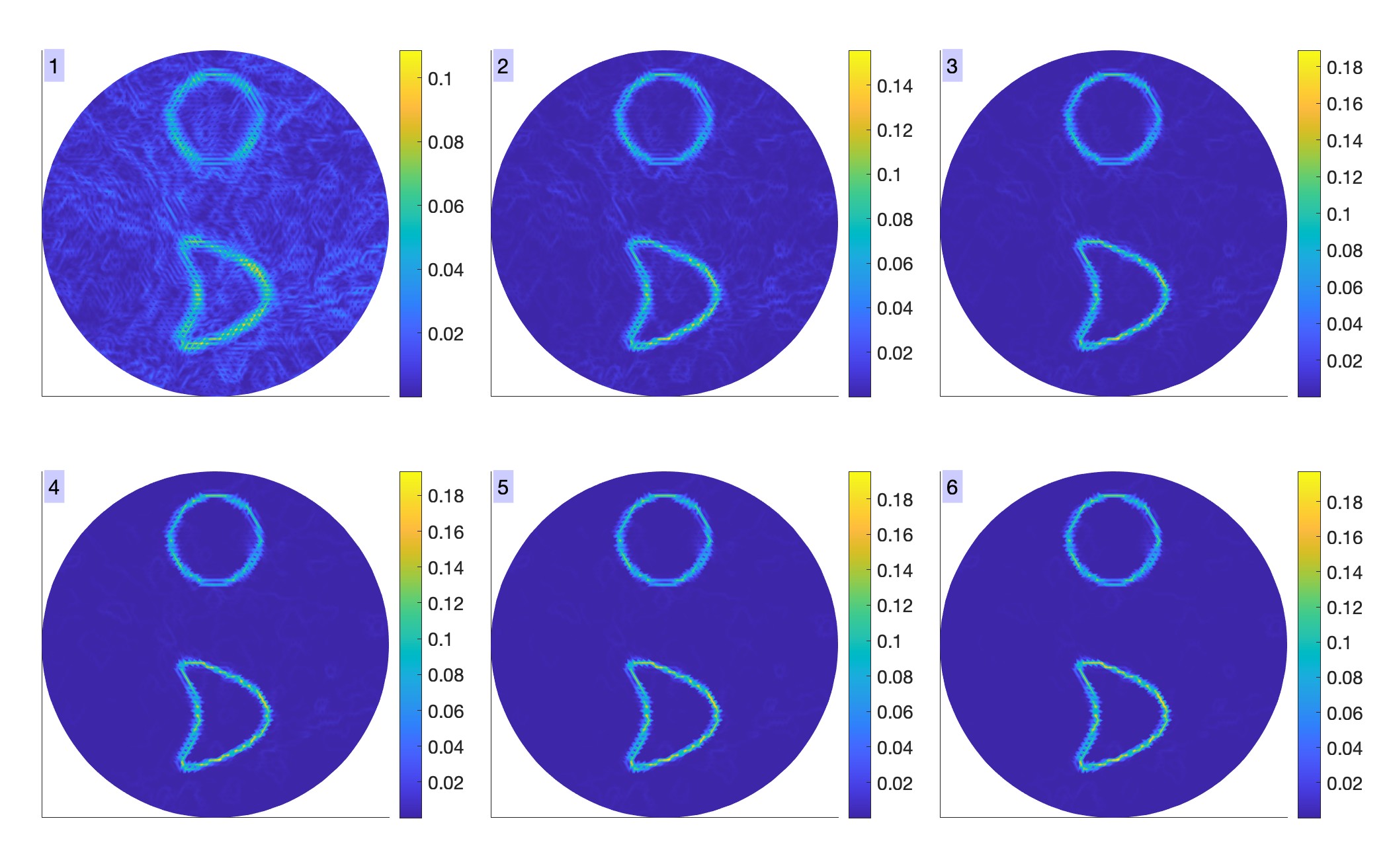}}
\caption{\label{fig:tomoresults} Six first iterations of the density reconstruction of the IAS algorithm (two first rows, lexicographical order), and the corresponding variances $\theta$ (two last rows, lexicographical order). After six iterations, the reconstructions remain visually very similar to the last reconstruction.}
\end{figure}

To demonstrate the importance of avoiding the QR factorization of the matrix $\mL_\theta$, we run the algorithm using the lean QR algorithm, comparing the computing times with those of the algorithm that avoids the factorization as detailed in section~\ref{sec:qr}. The computing times are shown in Figure~\ref{fig:times}. We observe that the time needed for each iteration without the QR factorization is two orders of magnitude smaller than the time with the factorization. It is to be expected that with larger problems, the savings become even more significant. The reconstructions (not shown here) are visually indistinguishable from the ones without the decomposition.

\begin{figure}[ht!]
\centerline{
\includegraphics[width=17cm]{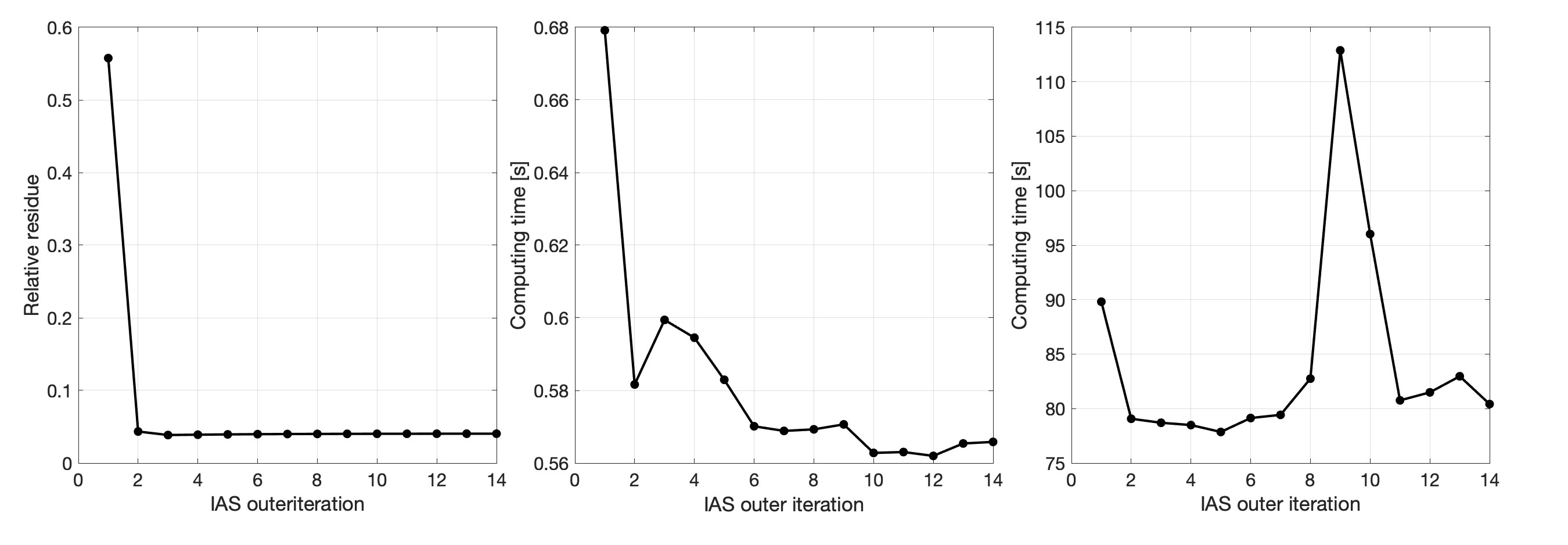}
}
\caption{\label{fig:times} The left panel shows the relative residues $\|b - \mA\xi\|/\|b\|$ as the iterations avoiding the QR decomposition progresses. The computing times per outer iterations of the QR-free iterations are plotted in the middle panel. For comparison, we ran the algorithm using the QR-decomposition at each outer iteration, the computing times being shown in the right panel.
}
\end{figure}

\section{Conclusions}

The formal similarity of Tikhonov-type regularization methods and the maximum a posteriori estimation methods has been known for long, however, a clear connection between the popular methods for setting the regularization parameter, including the discrepancy principle, GCV, and UPRE in the one hand, and Bayesian methods on the other hand is less well known. Comparisons between the classical methods and selection criteria based on Bayesian methods such as the maximum evidence can be found in the literature, see, e.g., \cite{hall2023efficient,pereira2015empirical,chung2017generalized,saibaba2020efficient,huri2016selecting,bardsley2011techniques}. One of the aims of this article is to make the connection clearer, basing the parameter selection on a unified MAP estimation process for both the unknown and the regularization parameter. One of the interesting findings here is the striking power law between the regularization  parameter $\alpha$ and the relative noise level, as shown in Figure~\ref{fig:Tikh summary}. Such dependency is often postulated in theoretical  articles in which Bernstein-von Mises type posterior convergence results are studied: 
In order to get convergence of the posterior densities in the small noise limit, the authors  ``make the regularization disappear in a carefully chosen way" \cite{agapiou2013posterior,agapiou2014bayesian}. The hierarchical Bayesian models may provide a natural setting in which such requirement follows automatically.

The use of Lanczos bidiagonalization to solve the linear system adjoint to the normal equations arising in the IAS algorithm puts the computation on more solid ground, as the need of resorting to approximate methods such as replacing the standard Tikhonov type problem by, e.g., CGLS approximation with early stopping  (see, e.g., \cite{calvetti2023bayesian} )may not be necessary. The computing times shown in Figure~\ref{fig:times} support this view.

\section*{Acknowledgements}

The work of DC was partly supported by the NSF grant DMS 1951446, and that of  ES by the NSF grant DMS-2204618.

{\bf Conflict of interest:} The authors have no competing interests to declare that are relevant to the content of this article.

{\bf Data availability:} No datasets were generated or analyzed during the current study.

\bibliographystyle{siam} 
\bibliography{biblio_Tikh}
\end{document}